\newcommand\myshade{85}
\definecolor{mycitecolor}{rgb}{0.94, 0.8, 0.0}
\definecolor{mylinkcolor}{rgb}{0.56, 0.0, 1.0}
\definecolor{myurlcolor}{rgb}{0.5, 1.0, 0.83}
\newcommand{\word}[1]{\textcolor{blue!40!black}{\emph{#1}}}
\declaretheorem[name=Theorem,refname={Theorem,Theorems},Refname={Theorem,Theorems},numberwithin=section,]{theorem}
\declaretheorem[name=Theorem,numbered=no,refname={Theorem,Theorems},Refname={Theorem,Theorems}]{mtheorem}
\declaretheorem[name=Lemma,refname={Lemma,Lemmas},Refname={Lemma,Lemmas},sibling=theorem,]{lemma}
\declaretheorem[name=Claim,numbered=no,]{claim*}
\declaretheorem[name=Fact,numbered=no]{fact*}
\declaretheorem[name=Definition,refname={Definition,Definitions},Refname={Definition,Definitions},sibling=theorem,style=definition,]{definition}
\declaretheorem[name=Remark,refname={Remark,Remarks},Refname={Remark,Remarks},sibling=theorem,style=remark,]{remark}
\declaretheorem[name=Remark,style=remark,numbered=no,]{remark*}
\newcommand{\Z}{\mathbb{Z}}
\newcommand{\N}{\mathbb{N}}
\newcommand{\R}{\mathbb{R}}
\newcommand{\Q}{\mathbb{Q}}
\newcommand{\V}[1]{\mathbb{V}(#1)}
\newcommand{\B}{\mathfrak{B}}
\newcommand{\seq}{\subseteq}
\newcommand{\les}{\preccurlyeq}
\newcommand{\df}{\coloneqq}
\DeclareMathOperator{\conv}{\tt conv}
\DeclareMathOperator{\vertices}{\tt vert}
\newcommand{\den}{\mathrm{den}}
\renewcommand{\leq}{\leqslant}
\renewcommand{\geq}{\geqslant}
\newcommand{\var}{\mathscr{V}}
\newcommand{\varprob}{{X}}
\newcommand{\varsol}{{Y}}
\newcommand{\varaux}{{Z}}
\newcommand{\eps}{\varepsilon}
\newcommand{\terms}[1]{T_{#1}(\LL)}
\DeclarePairedDelimiter\floor{\lfloor}{\rfloor}
\DeclareMathOperator{\dg}{\mathtt{d}}
\newcommand{\hooklongrightarrow}{\lhook\joinrel\longrightarrow}
\def\FF{\ensuremath{\mathcal F}}
\def\KK{\ensuremath{\mathcal K}}
\def\LL{\ensuremath{\mathcal L}}
\title[{\L}ukasiewicz unification with finitely many variables]{The unification type of {\L}ukasiewicz logic with a bounded number of variables}
\author{Marco~Abbadini}
\address{School of Computer Science, University of Birmingham, B15 2TT Birmingham, United Kingdom}
\email{m.abbadini@bham.ac.uk}
\author{Luca~Spada}
\address{Dipartimento di Matematica, Università degli Studi di Salerno, Piazza Renato Caccioppoli, 2, 84084 Fisciano (SA), Italy}
\email{lspada@unisa.it}
\subjclass[2020]{Primary: 06D35. Secondary: 03C05, 52B20, 57M10}
\keywords{Łukasiewicz logic, Unification, MV-algebras, Rational polyhedra, Covering space}
\begin{document}

\begin{abstract}
Building on the correspondence between finitely axiomatised theories in {\L}ukasiewicz logic and rational polyhedra, we prove that the unification type of the fragment of {\L}ukasiewicz logic with $n\geq 2$ variables is nullary. This solves a problem left open by V.\ Marra and L.\ Spada [Ann.\ Pure Appl.\ Logic 164 (2013), pp.\ 192–210]. 
Furthermore, we refine the study of unification with bounds on the number of variables. Our proposal distinguishes the number $m$ of variables allowed in the problem and the number $n$ in the solution. We prove that the unification type of {\L}ukasiewicz logic for all $m,n \geq 2$ is nullary. 
\end{abstract}

\maketitle

%%%%%%%%%%%%%%%%%%%%%%%%%%%%%%%%%%%%%%%%%%%%%%%%%%%%%%%%%%%%%%%%
%SECTION
\section{Introduction} \label{s:introduction}

Unification is a fundamental concept in computer science, with applications in logic programming, automated reasoning, and type inference. At its core, a unification problem asks for a substitution that makes two different logical expressions identical. In automated reasoning, unification plays a crucial role in the resolution principle \cite{robinson1965machine}. In type systems, particularly within functional and logic programming languages, unification algorithms are employed to infer the most general types of expressions \cite{Hindley,MILNER1978348}.

When syntactical identity is replaced by equality modulo a given equational theory $E$, one speaks of $E$-unification. The study of unification modulo an equational theory has acquired increasing significance in the last few decades (see e.g. \cite{baader2001unification}).  The most basic property of $E$ in relation to unification issues is its \emph{unification type}. In rough terms, this determines the number of optimal solutions for a unification problem in the worst possible case. Unification modulo a theory finds applications in logic, where it is often used to study \emph{admissible rules} (see e.g., \cite{
10.1215/00294527-2009-004,
iemhoff2001admissible,
jerabek:admissible-rules-lukasiewicz,
jerabek:bases-lukasiewicz,
jerabek:complexity-lukasiewicz,
10.1093/logcom/exi029,
10.1093/jigpal/jzn004}).

In this paper, we are concerned with unification in {\L}ukasiewicz logic.
{\L}ukasiewicz logic is a non-classical many-valued logic that extends classical logic by allowing for more than two truth values. Named after the Polish logician Jan {\L}ukasiewicz, who introduced it in the early twentieth century (cf.\ \cite[Sec.~3]{LukTar1930} and \cite[pp.~38-59]{Tarski1956}), 
the infinitely-valued {\L}ukasiewicz logic generalises the concept of true and false to include infinitely many truth values, typically ranging from $0$ to $1$. This approach provides a flexible framework for modelling vagueness, gradations of truth, and uncertainty.

The unification type of {\L}ukasiewicz logic was studied in \cite{MSuni} where it was proved to be \emph{nullary}, i.e., the worst possible type. The proof of this result uses geometric methods and in particular involves the universal covering of the boundary of the square. The role of the universal covering is to provide more and more general solutions to a unification problem, thus showing that its solutions do not lie under any maximally general solution (formal definitions are given in \Cref{s:unification}).

The solutions provided by the universal covering involve an increasing number of variables. Thus, one naturally wonders whether bounding the number of variables allowed might lead to a better unification type. 

For example, this is the case for the equational theory of bounded distributive lattices.
Indeed, its unification type was proved to be nullary in \cite[Thm.\ 5.7]{Ghi1997}.
However, the variety is locally finite, and so, for every $n \in \N$, every unification problem has finitely many solutions with at most $n$ variables. It follows that, for every $n \in \N$, the unification type of bounded distributive lattices restricted to $n$ variables is either unitary or finitary.

The unification type of {\L}ukasiewicz logic with a bounded number of variables was conjectured to be nullary in \cite[p. 210]{MSuni}. In \cite{AbbaDiSpada}, a partial answer to the conjecture was provided: the authors proved that, for every $n \geq 2$, the unification type of the fragment of {\L}ukasiewicz logic with $n$ distinct variables is either infinitary or nullary.  The main contribution of this paper is the following.
\begin{mtheorem}[Main Result]\label{main:thm}
For all $m\geq 2$ and $n\geq 2$, the unification type of {\L}ukasiewicz logic restricted to at most $m$ variables for the problem and at most $n$ variables for the solutions is nullary. 
\end{mtheorem}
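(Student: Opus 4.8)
The plan is to work on the geometric side of the duality recalled in the introduction. A unification problem in at most $m$ variables is (dual to) a non‑empty rational polyhedron $P\seq[0,1]^m$; a unifier of $P$ in at most $n$ variables is a $\Z$‑map (a continuous, piecewise‑linear map with integer coefficients) $f\colon Q\to P$ whose domain $Q$ is a $\Z$‑retract of a cube $[0,1]^k$ with $k\le n$ (in particular a cube itself); and a unifier $g$ is at least as general as $f$ exactly when $f$ factors through $g$ by a $\Z$‑map between the two domains. The first observation is purely order‑theoretic: if $S$ is a complete set of unifiers of a unifiable problem and some $s\in S$ is \emph{not} maximal with respect to generality, say $s<z$ for a unifier $z$, then $S\setminus\{s\}$ is still complete, because any unifier $x\le s$ satisfies $x\le z\le s'$ for some $s'\in S$, and necessarily $s'\neq s$ since $z\not\le s$. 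Hence every element of a minimal complete set of unifiers is maximal, and consequently \emph{a unifiable problem whose unifiers have no maximal element admits no minimal complete set of unifiers, so its unification type is nullary}. It therefore suffices to produce, for all $m,n\ge 2$, a unifiable problem in at most $m$ variables whose $\le n$‑variable unifiers have no maximal element.

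For the problem I would take the one already used in \cite{MSuni}, namely $P=\partial([0,1]^2)\seq[0,1]^2\seq[0,1]^m$ (the boundary of the square, with $m-2$ dummy coordinates): a non‑empty rational polyhedron homeomorphic to $S^1$, hence a unifiable problem in at most $m$ variables. The $\le n$‑variable unifiers of $P$ are analysed through the universal covering $p\colon\R\to P$ of the circle $P$. Since a cube, and more generally a $\Z$‑retract of a cube, is simply connected, every $\le n$‑variable unifier $f\colon Q\to P$ lifts to a $\Z$‑map $\tilde f\colon Q\to\R$ with $p\circ\tilde f=f$; the image $\tilde f(Q)$ is a compact interval, whose length $\lambda(f)$ is well defined independently of the chosen lift (lifts differ by deck translations). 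Monotonicity of $\lambda$ is the source of strictness: if $f=g\circ h$ then $\tilde g\circ h$ is a lift of $f$, so $\tilde f(Q)$ may be taken inside $\tilde g(Q')$ and $\lambda(f)\le\lambda(g)$; in particular $f\le g$ together with $\lambda(f)<\lambda(g)$ forces $f<g$.

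The core of the proof, and the step I expect to be the main obstacle, is then: \emph{for every $\le n$‑variable unifier $f$ of $P$ there is a $\le n$‑variable unifier $g$ of $P$ with $f\le g$ and $\lambda(f)<\lambda(g)$} (whence $f<g$ by the previous paragraph). Informally $g$ should be ``the truncation of the covering $p$ over an interval strictly longer than $\tilde f(Q)$'', so that $f$ factors through it. In the unbounded setting of \cite{MSuni} such a truncated cover is realised ``straight'', which costs a number of variables growing with its length — exactly the phenomenon that blocks a naive adaptation and that left the type undecided in \cite{AbbaDiSpada}. The point of the bounded‑variable statement is that, because $n\ge 2$, one can instead realise arbitrarily long truncations of $p$ ``coiled'' inside $[0,1]^2\seq[0,1]^n$, as $\Z$‑retracts of the square carrying a $\Z$‑map to $P$ that winds around many times. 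The delicacy is that a coiled realisation is \emph{not} $\Z$‑isomorphic to the straight one — the coefficients being forced to be integers, one cannot freely rescale — so it is not automatic that the coiled $g$ dominates the given $f$. I would handle this by first putting $f$ itself into a ``coiled normal form'' supported on a spiralling rational polyhedron in $[0,1]^2$, using that $\le n$‑variable unifiers of $P$ are, up to generality‑equivalence, of this shape, and then superimposing $f$ and the desired longer cover onto a single finer spiral $Q\seq[0,1]^2$, together with an explicit $\Z$‑retraction ``folding away'' the extra turns so that $g$ restricted to the image of $f$'s spiral reproduces $f$. Verifying that the spiral can be laid out with rational breakpoints and integer slopes while still admitting these folding retractions is the crux; this is where the covering‑space methods of \cite{MSuni} and the combinatorics of $\Z$‑retracts of cubes carry the real weight.

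Granting this, the $\le n$‑variable unifiers of $P$ have no maximal element, so by the order‑theoretic remark of the first paragraph $P$ has no minimal complete set of $\le n$‑variable unifiers. Since $P$ is a unifiable problem in at most $m$ variables, the unification type of {\L}ukasiewicz logic restricted to at most $m$ variables in the problem and at most $n$ variables in the solutions is nullary, for all $m,n\ge 2$.
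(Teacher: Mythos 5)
Your overall skeleton coincides with the paper's: the same problem, dual to the boundary $\B$ of the square, the universal cover $\zeta\colon\R\to\B$, the degree invariant and its monotonicity under generality (\cref{l:degree-factor}), and the order-theoretic reduction of nullarity to the absence of maximal unifiers above some unifier. The genuine gap is exactly the step you flag as the crux: given a $\le n$-variable unifier, produce a strictly more general $\le n$-variable unifier. Your sketched route --- putting the given unifier into a ``coiled normal form'' supported on a spiralling polyhedron in $[0,1]^2$ and then ``folding away'' extra turns --- is not carried out, and its pivotal claim (that every $\le n$-variable unifier of $\B$ is, up to generality equivalence, of this spiral shape) is unjustified; nothing forces a general $\Z$-map $[0,1]^n\to\B$ to factor, even up to equivalence, through a prescribed one-dimensional spiral, and the integer-coefficient rigidity you correctly identify is precisely what blocks such a normal form. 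The paper never realises a long truncation of the cover in a prescribed shape. Instead it lifts $\eta$ to $\tilde\eta\colon[0,1]^n\to\R$, uses the Squeezing Lemma (\cref{l:retract}) --- a local, denominator-preserving bend of a boundary face of a triangulation --- to obtain a \emph{non-surjective} $\Z$-map $\alpha\colon[0,1]^n\to[0,1]^n$ with $\tilde\eta\circ\alpha=\tilde\eta$ (\cref{l:make-space}), then redefines $\tilde\eta$ outside the image of $\alpha$ so as to attain a new integer value (\cref{l:extend-super}, \cref{t:exists-upper-bound}); composing with $\zeta$ strictly increases the degree and yields the strictly more general unifier (\cref{p:make-space}). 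Some construction of this kind is what your ``folding retractions'' would have to amount to, and it is the part your proposal leaves unproven.

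Two further points. You assert that \emph{no} $\le n$-variable unifier of $\B$ is maximal, whereas the paper proves, and needs, only that unifiers more general than the specific unifier $\iota'$ admit strictly more general ones. The restriction is not cosmetic: the squeezing construction requires $\eta$ to be non-constant on the boundary of the cube, so that some $(n-1)$-simplex of the triangulation on which $\eta$ is non-constant lies in that boundary and hence belongs to a single $n$-simplex, which is what allows bending it inwards without destroying continuity; unifiers constant on the boundary are not handled but avoided, via \cref{l:not-constant-is-preserved}, so your blanket claim would need a separate argument. Finally, you take unifier domains to be $\Z$-retracts of cubes; under \cref{l:iso} an $\le n$-variable unifier corresponds to a $\Z$-map from a cube $[0,1]^k$ with $k\le n$ (the paper deliberately avoids projectivity, as there is no categorical handle on ``presentable with $n$ variables''). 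This is repairable by precomposing with a $\Z$-retraction of the cube onto your spiral, but it must be said, otherwise the spiral-based maps are not unifiers in the bounded-variable sense whose type is being measured.
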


Regarding the remaining cases, note that if $m=0$ then the unification is trivially unitary since this is the case for every equational theory $E$. Furthermore, if $m \geq 1$ and $n = 0$ then the unification is finitary (e.g.\ the problem $x \approx x$ has precisely two solutions: $0$ and $1$; and they are incomparable).
The case $m=1$ and $n\geq 1$ gives a finitary unification type (see \cite[Section 4]{MSuni}).
It remains an open problem to determine the unification type restricted to at most $n=1$ variable for the solution and at most $m \geq 2$ variables for the problem. (We are not able to address this case because \cref{l:make-space} below fails for $n=1$.)

The following table summarises the unification types of {\L}ukasiewicz logic restricted to at most $m\in\{0,1,2,\dots\}\cup\{\aleph_0\}$ variables for the problem and at most $n\in\{0,1,2,\dots\}\cup\{\aleph_0\}$ for the solution.
\begin{center}
\begin{tabular}{| r | c c c |}
\hline
& $n=0$ & $n=1$ & $n\geq 2$ \\
\hline
$m=0$ & unitary & unitary & unitary\\
$m=1$ & finitary & finitary& finitary\\
$m\geq 2$ & finitary & ? & nullary\\
\hline  
\end{tabular}
\end{center}\vspace{1em}

The proof of the \nameref{main:thm} is performed by exhibiting a specific unification problem that witnesses the nullarity of the unification type. More precisely, we show:
\begin{fact*}\label{main:lemma}
The unification problem 
\begin{align}\label{eq:unification-problem}
x_1 \lor x_2 \lor \lnot x_1 \lor \lnot x_2 \approx 1
\end{align}
in the variables $x_1$ and $x_2$ has a unifier $\iota$ in the variable $y$, namely
\[
\iota(x_1) = y \text{ and }\iota(x_2) = 0,
\]
such that, for every $n \geq 2$ and unifier $\sigma$ in at most $n$ variables more general than $\iota$, there is a unifier $\sigma'$ in at most $n$ variables strictly more general than $\sigma$.
\end{fact*}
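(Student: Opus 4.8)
\emph{Strategy and translation.} I would move to the dual category of rational polyhedra and $\Z$-maps (piecewise $\Z$-linear maps) and, for each admissible $\sigma$, produce a strictly more general unifier by ``winding one extra time around'' the boundary of the square, hiding the extra winding in a collar of the section witnessing $\iota\preccurlyeq\sigma$. Concretely, let $\mathbb{S}\df\partial([0,1]^2)$, a rational polyhedron $\Z$-homeomorphic to $\R/4\Z$. The problem \eqref{eq:unification-problem} corresponds to $\mathbb{S}$: a unifier in at most $n$ variables is a $\Z$-map $f\colon[0,1]^n\to\mathbb{S}$, and $\iota$ corresponds to the inclusion $e\colon[0,1]\to\mathbb{S}$, $t\mapsto(t,0)$, of the bottom edge. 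In this picture $\tau$ is (weakly) more general than $\rho$ iff $f_\rho$ factors through $f_\tau$ by a $\Z$-map of the ambient cubes; in particular $\iota\preccurlyeq\sigma$ iff there is a $\Z$-map $c\colon[0,1]\to[0,1]^n$ with $f_\sigma\circ c=e$, and then $c$ is automatically a $\Z$-homeomorphism onto its image, an arc $J\seq[0,1]^n$ that $f_\sigma$ carries homeomorphically onto the bottom edge. So it suffices to show: for every $\Z$-map $f\colon[0,1]^n\to\mathbb{S}$ admitting such a $\Z$-section $c$ over $e$, there are $\Z$-maps $f'\colon[0,1]^n\to\mathbb{S}$ and $d\colon[0,1]^n\to[0,1]^n$ with $f'\circ d=f$ but with no $\Z$-map $d'$ satisfying $f\circ d'=f'$.

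\emph{A monotone invariant.} Since $[0,1]^n$ is simply connected, every $\Z$-map $f\colon[0,1]^n\to\mathbb{S}$ lifts along the universal covering $\pi\colon\R\to\mathbb{S}$ to a $\Z$-map $\tilde f\colon[0,1]^n\to\R$, unique up to translation by $4\Z$; its image is a compact rational interval, so $w(f)\df\operatorname{diam}(\operatorname{im}\tilde f)$ is well defined. If $f=g\circ h$ then $\tilde g\circ h$ is a lift of $f$, whence $\operatorname{im}\tilde f\subseteq\operatorname{im}\tilde g$ up to translation and $w(f)\le w(g)$: the quantity $w$ does not decrease along ``more general''. Hence, to certify that a unifier $\sigma'$ with $\sigma\preccurlyeq\sigma'$ is \emph{strictly} more general than $\sigma$ it is enough to check $w(f_{\sigma'})>w(f_\sigma)$; and then $\iota\preccurlyeq\sigma\preccurlyeq\sigma'$ holds by transitivity, so $\sigma'$ is again of the required form.

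\emph{The construction.} Given $\sigma$ with $\iota\preccurlyeq\sigma$, I would apply \cref{l:make-space} to replace $\sigma$ by an equivalent unifier and to reposition the section $c$ so that $J$ lies on a coordinate edge of $[0,1]^n$ with a prism-shaped neighbourhood $N$ of $J$ inside $[0,1]^n$ free for surgery. This is the step that forces $n\ge2$: the arc $J$ is $\Z$-homeomorphic to $[0,1]$, so in $[0,1]^1$ it would fill everything and leave no room, which is why \cref{l:make-space}, and with it the whole argument, fails for $n=1$ (indeed for $n=1$ the only unifiers more general than $\iota$ are the ones equivalent to $\iota$, so the statement is then false). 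Using the free normal directions of $J$, I would then modify $\tilde f_\sigma$ inside $N$ to a $\Z$-map $\tilde f_{\sigma'}\colon[0,1]^n\to\R$ agreeing with $\tilde f_\sigma$ outside $N$ but whose image is longer by $4$ — one extra trip around $\R/4\Z$ — and simultaneously build a $\Z$-map $d\colon[0,1]^n\to[0,1]^n$, the identity outside $N$ and a fold inside $N$, so that $f_{\sigma'}\circ d=f_\sigma$ for $f_{\sigma'}\df\pi\circ\tilde f_{\sigma'}$. Then $f_\sigma$ factors through $f_{\sigma'}$, while $w(f_{\sigma'})=w(f_\sigma)+4>w(f_\sigma)$, so by the previous paragraph $\sigma\prec\sigma'$ and $\sigma'$ uses at most $n$ variables, as wanted.

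\emph{The main obstacle.} I expect the delicate point to be the surgery: both $f_{\sigma'}$ and, above all, the collapsing map $d$ must be honest $\Z$-maps, where one may only fold (tent-type maps) and rescale by integers, never halve or retract onto an arbitrary subregion. The whole purpose of \cref{l:make-space} is to put the geometry around $J$ into a normal form in which the extra winding can be inserted and then undone by $d$ using only such moves, which is possible precisely because $J$ has free normal directions when $n\ge2$. Granting this, monotonicity of $w$ does the rest.
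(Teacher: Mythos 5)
Your global strategy---pass to the dual polyhedral picture, use the universal cover, exploit a winding invariant that is monotone under ``more general'' (your $w$ is the paper's degree $\dg$, and your monotonicity claim is \cref{l:degree-factor}), then make room via a non-surjective self-map of the cube and add winding in the freed region---is the same as the paper's. The genuine gap is in the construction step, which is exactly where the paper's technical work lies, and your plan for it would not go through as written. First, \cref{l:make-space} does not do what you ask of it: it takes a real-valued $\Z$-map $\eta\colon[0,1]^n\to\R$ that is \emph{not constant on the boundary of the cube} and returns a non-surjective $\Z$-map $\alpha$ with $\eta\circ\alpha=\eta$; it neither ``repositions the section $c$'' nor produces a ``prism-shaped neighbourhood of $J$ free for surgery''. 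In fact the arc $J$ plays no role beyond one observation you never make: since $c$ is a $\Z$-map, denominators cannot increase, so $c(\{0,1\})\subseteq\{0,1\}^n$, and $f_\sigma(c(0))=(0,0)\neq(1,0)=f_\sigma(c(1))$ forces $f_\sigma$ to be non-constant on $\{0,1\}^n$, hence on the boundary; this (i.e., \cref{l:not-constant-is-preserved}) is what licenses \cref{l:make-space}, not any normal form around $J$. Second, your proposed normalization---moving $J$ onto a coordinate edge by passing to an equivalent unifier---is unjustified: $\Z$-maps preserve denominators, so in general no $\Z$-automorphism of $[0,1]^n$ carries a given rational arc to a coordinate edge, and nothing in the paper supplies such a move (nor is it needed). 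Third, the surgery itself---inserting extra winding and undoing it by a fold $d$, all by honest $\Z$-maps---is precisely the content of the Squeezing Lemma \cref{l:retract} together with \cref{l:extend-super} and \cref{t:exists-upper-bound}: one bends a boundary face of a triangulation on which the lift is non-constant, and the delicate point is matching denominators of the displaced vertex (\cref{l:hyperplane}) so that \cref{l:unique-extension} applies. You flag this as ``the main obstacle'' but give no argument, and attributing it to \cref{l:make-space} does not close it. (Incidentally, it suffices that the new lift attain one extra integer value; a full extra loop of length $4$ is more than is needed or easily arranged in a small freed region.)

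A smaller but real omission: the Fact quantifies over unifiers $\sigma$ in \emph{at most} $n$ variables, so $\sigma$ may live on $[0,1]^{n'}$ with $n'<2$ (for instance $\iota$ itself); you silently identify such a $\sigma$ with a map defined on $[0,1]^n$. This padding is harmless, but it must be stated and checked that the padded unifier is equivalent to $\sigma$ and that the strictly more general unifier produced still uses at most $n$ variables; this is the role of \cref{l:technical} in the paper.
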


Notice that the unification problem \eqref{eq:unification-problem} is the same that was used in \cite{MSuni} to prove the nullarity of the unification type of the full {\L}ukasiewicz logic. However, the proof strategy in this case is completely different.  We summarise here the main steps.  

First, in \Cref{s:unification} we refine Ghilardi's algebraic approach in order to account for bounds on the number of variables in the problem and in the solutions (\Cref{l:iso}); this allows us to recast the unification problem with a bounded number of variables in terms of the algebraic semantics of {\L}ukasiewicz logic: MV-algebras.  As a second step, we use the duality between finitely presented MV-algebras and rational polyhedra (see \Cref{s:lukasiewicz}) to transform the algebraic problem into a geometric one.  The basic concepts and results in polyhedral geometry needed in the rest of the paper are recalled in this section.

Under the above duality, the unification problem \eqref{eq:unification-problem} corresponds to the boundary $\B$ of the unit square $[0,1]^2$. Unifiers correspond in the duality to piecewise affine maps with integer coefficients.  We prove (i) that every unifier $[0,1]^n \to \B$ more general than
\begin{align*}
\iota' \colon [0,1] &\hooklongrightarrow \B\\
x &\longmapsto (x,0)
\end{align*}
is not constant on the boundary of $[0,1]^n$ (\cref{l:not-constant-is-preserved}) and (ii) that every unifier $[0,1]^n \to \B$ that is not constant on the boundary of $[0,1]^n$ admits a strictly more general unifier $[0,1]^{\max\{2, n\}} \to \B$ (this is \cref{p:make-space}, if we take into account the first observation in the proof of \cref{l:technical}).
Together, these two claims prove the Fact above and hence the nullarity, for any $m,n \geq 2$, of the unification type of {\L}ukasiewicz logic restricted to at most $m$ variables for the problem and at most $n$ for the solutions. Although claim (i) is quite immediate, claim (ii) will require more technical work, which we do in \Cref{s:squeeze,s:strictly-more-general}.

%%%%%%%%%%%%%%%%%%%%%%%%%%%%%%%%%%%%%%%SECTION%%%%%%%%%%%%%%%%%%%%%%%%%%%%%%%%%%%%%%%
\section{Unification}
\label{s:unification}
%%%%%%%%%%%%%%%%%%%%%%%%%%%%%%%%%%%%%%%%%%%%%%%%%%%%%%%%%%%%%%%%%%%%%%%%%%%%%%%%%%%%%

Let $\var$ be a fixed infinite set of variables. We fix a purely functional language $\LL=(F,\alpha)$ in the usual sense, i.e.\ $F$ is a set of function symbols and $\alpha$ an arity function. For $\varprob\seq\var$ we write $\terms{\varprob}$ for the set of all terms in $\LL$ with variables ranging in $\varprob$. 

As usual, by an \word{equation} we mean a pair $(t_1,t_2)$ of terms in $\terms{\varprob}$ (usually written as $t_1\approx t_2$).  Let $E$ be a set of equations. By $t_1\approx_{E}t_2$ we mean that the universal closure of the equation $t_1 \approx t_2$ holds in every $\LL$-algebra that satisfies the universal closure of every equation from $E$. 
For $\varprob,\varsol\seq\var$, a map $\sigma\colon \varprob\to \terms{\varsol}$ is called a \word{substitution}. Every substitution $\sigma\colon \varprob\to \terms{\varsol}$ extends in a unique way to a map $\widehat{\sigma}\colon \terms{\varprob}\to \terms{\varsol}$ that commutes with the function symbols in $\LL$. 

An \word{$E$-unification problem} is a pair $(\varprob, S)$ where $X$ is a finite subset of $\var$ and $S=\{s_1\approx t_1,\dots, s_k\approx t_k\}$ is a finite set of equations with variables in $X$.
An \word{$E$-unifier for $(X,S)$ with variables in $\varsol$} is a substitution $\sigma\colon \varprob\to \terms{\varsol}$ such that
\begin{equation*}
        \widehat{\sigma}(s_1)\approx_E\widehat{\sigma}(t_1),\quad \dots,\quad \widehat{\sigma}(s_k)\approx_{E}\widehat{\sigma}(t_k). 
\end{equation*}
For $n\in\{0,1,2,\dots,\}\cup\{\aleph_0\}$, we denote by $U_E(n,X,S)$ the set of all $E$-unifiers for $(X,S)$ with variables\footnote{Notice that, although we require the unifiers in $U_E(n,X,S)$ to have their codomain equal to $\terms{\varsol}$, we are not requiring them to mention all variables in $\varsol$.} in $Y$, with $Y$ ranging among finite subsets of $\var$ of cardinality at most $n$.

 Let $\varsol_{1},\varsol_{2}\seq\var$ and let $\sigma\colon \varprob\to \terms{\varsol_1}$ and $\tau\colon \varprob\to \terms{\varsol_2}$ be substitutions. We say that $\sigma$ is \word{more general} than $\tau$ (with respect to $E$), in symbols $\tau\les_{E}\sigma$, if there is a substitution $\theta\colon \varsol_1\to \terms{\varsol_2}$ such that $\tau(x)\approx_{E}\widehat{\theta}(\sigma(x))$ is valid for every $x\in X$. The relation $\les_{E}$ is a preorder, i.e., a reflexive and transitive relation.
We consider the restriction of $\les_{E}$ to $U_E(n,X,S)$. It is well known that any preordered set $(P,\les)$ induces a partial order ---called the \word{poset reflection}--- on the quotient $P/{\sim}$, where $p\sim q$ if and only if $p\les q$ and $q\les p$,  for any $p,q\in P$. With an abuse of notation, we denote in the same way the pre-ordered set and its partially ordered quotient.

We say that a subset $S$ of a partially ordered set $(P,\leq)$ \word{covers} $P$ if every $p\in P$ is less than or equal to some $s\in S$.
We say that a non-empty partially ordered set $(P,\leq)$ has:
\begin{enumerate}
\item  \word{unitary type} if the set of maximal elements of $(P,\leq)$ covers $P$ and has cardinality $1$;
\item \word{finitary type} if the set of maximal elements of $(P,\leq)$ covers $P$ and has finite cardinality strictly greater than $1$;
\item \word{infinitary type} if the set of maximal elements of $(P,\leq)$ covers $P$ and is infinite;
\item \word{nullary type} if the set of maximal elements of $(P,\leq)$ does not cover $P$.
\end{enumerate}
Exactly one of the above conditions holds. It is understood that the list above is arranged in decreasing order of desirability.  The \word{unification type of $E$} is defined as the least desirable type occurring among all non-empty $U_{E}(\aleph_0,\varprob,S)$, for $(\varprob, S)$ that ranges among all $E$-unification problems.
More generally, given $m,n\in \{0,1,2,\dots\}\cup\{\aleph_0\}$, the \word{unification type of $E$ restricted to at most $m$ variables for the problem and at most $n$ variables for the solution} is defined as the least desirable type occurring among all non-empty $U_{E}(n,X,S)$, for $(\varprob, S)$ that ranges among all $E$-unification problems with $\varprob$ of cardinality at most $m$.
\begin{remark}
If in the above definition one or both of the ``at most''  are replaced with ``precisely'' one gets an equivalent definition. This is particularly easy to see if the language has at least a constant symbol $c$. In this case, for any $m' \leq m$, every problem in variables $x_1, \dots, x_{m'}$ can be turned into a problem in the variables $x_1, \dots, x_m$ by adding for any $i \in \{m' + 1, \dots, m\}$ the equation $x_i \approx c$. This shows that the ``at most'' regarding the problem can be turned into a ``precisely''. Moreover, for any $n' \leq n$, every unifier in variables $y_1, \dots, y_{n'}$ is equally general to itself seen in variables $y_1, \dots, y_n$, as witnessed by the substitution $\{y_1, \dots, y_{n'}\} \hookrightarrow T_{\{y_1, \dots, y_n\}}$ mapping $y_i$ to $y_i$ and the substitution $\{y_1, \dots, y_{n'}\} \to T_{\{y_1, \dots, y_n\}}$ mapping $y_i$ to $y_i$ if $i \leq n'$ and to $c$ otherwise. This shows that also the ``at most'' regarding the solution can be turned into a ``precisely''.

The same is true also without the hypothesis that the language has at least a constant symbol. First of all, we notice that for all possible choices for the number of variables in the solutions, a unifiable problem in $0$ variables will have a unitary type, and so problems in $0$ variables can be disregarded. For any $1 \leq m' \leq m$, every problem in variables $x_1, \dots, x_{m'}$ can be thought of as a problem in the variables $x_1, \dots, x_{m'}, \dots, x_m$ by adding for any $i \in \{m' + 1, \dots, m\}$ the equation $x_i \approx x_1$. Finally,  observe that if the problem is in at least one variable, say $x$, then for all $n' \leq n$, every unifier $\sigma$ in variables $\{y_1, \dots, y_{n'}\}$ is equally general to itself seen in variables $\{y_1, \dots, y_{n}\}$, as witnessed by the substitution $\{y_1, \dots, y_{n'}\} \hookrightarrow T_{\{y_1, \dots, y_n\}}$ mapping $y_i$ to $y_i$ and the substitution $\{y_1, \dots, y_{n'}\} \to T_{\{y_1, \dots, y_n\}}$ mapping $y_i$ to $y_i$ if $i \leq n'$ and to $\sigma(x)$ otherwise.
\end{remark}

In \cite{Ghi1997}, Ghilardi introduced an algebraic approach to unification modulo an equational theory $E$ through the notions of finitely presentable and projective objects. Since these notions are categorical, the $E$-unification types can be studied up to a categorical equivalence without knowing how the equivalence functors are defined.

\begin{remark}
There cannot be any categorical characterisation of algebras that are ``presentable with $n$ variables'', as we briefly show here. 
Let $\mathbb{T}$ be the variety generated by any primal algebra with three elements. By \cite[Thm.\ 1]{Hu1969} (see also \cite[Cor., p.\ 153]{Hu1971}), $\mathbb{T}$ is dually equivalent to the category of Stone spaces, and thus it is also equivalent to the category of Boolean algebras.
The free Boolean algebra on $1$ generator corresponds under Stone duality to the $2$-element space.
The free $\mathbb{T}$-algebra on $1$ generator corresponds under this duality to the $3$-element space.
Therefore, under these dualities, the 1-generated Boolean algebras correspond to the Stone spaces with at most two elements, and the 1-generated $\mathbb{T}$-algebras to the Stone spaces with at most three elements.
Thus, the $\mathbb{T}$-algebra freely generated on $1$ generator corresponds to a Boolean algebra (namely, the power set of a set of three elements) that is not $1$-generated.
We conclude that the notion of ``$1$-generated'' is not preserved under categorical equivalence.
\end{remark}

Therefore, we work with free algebras instead of projective ones.
However, we will still solve the problem through a categorical duality; this will be possible because we will keep track of the objects that in our specific duality correspond to algebras that are presentable with $n$ variables.

Given a set of equations $E$ in a purely functional language $\LL$, let $\V{E}$ be the class of its models; $\V{E}$ is said to be a \word{variety}.
A \word{presentation} is a pair $(\varprob,S)$ consisting of a set $\varprob\seq \var$ and a set of equations $S$ with variables in $\varprob$. 
To each presentation $(\varprob,S)$ we associate the quotient of the $\V{E}$-algebra freely generated by $X$ by the congruence on this algebra generated by the image of $S$ under the canonical homomorphism from terms to elements of the free algebra.
We write $\FF(\varprob)$ as a shorthand for $\FF(\varprob,\emptyset)$; the algebra $\FF(\varprob)$ is known as the $\V{E}$-algebra freely generated by $\varprob$. An algebra $A$ is called \word{finitely presentable} if there is a presentation $(\varprob,S)$ with $\varprob$ and $S$ finite such that $A$ is isomorphic to $\FF(\varprob,S)$.  

Let $f\colon\FF(\varprob,S)\to \FF(\varaux_{1})$ and $g \colon \FF(\varprob,S)\to\FF(\varaux_{2})$ be $\LL$-homomorphisms with $Z_1,Z_2\seq \var$. We set $f\les g$ if there is an $\LL$-homomorphism $h\colon  \FF(\varaux_{2})\to \FF(\varaux_{1})$ such that $f=h\circ g$.
Let $V_E(n, X, S)$ be the set of $\LL$-homomorphisms from $\FF(X,S)$ to $\FF(\varaux)$, with $\varaux$ ranging among subsets of $\var$ of cardinality $n$.  
\begin{lemma} \label{l:iso}
    For every $E$-unification problem $(X,S)$ and every $n\in \{0,1,2,\dots\}\cup\{\aleph_0\}$, the partially ordered sets $U_E(n,X,S)$ and $V_E(n,X,S)$ are isomorphic.
\end{lemma}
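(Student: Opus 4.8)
The statement asks us to show that the two preorders (equivalently, their poset reflections) $U_E(n,X,S)$ and $V_E(n,X,S)$ are isomorphic. The natural approach is to exhibit the standard correspondence between substitutions and homomorphisms out of a finitely presented algebra, check it is well-defined on both sides, and verify it preserves and reflects the respective "more general than" preorders. Concretely, I would proceed as follows.

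\emph{The correspondence.} Fix an $E$-unification problem $(X,S)$ and $n \in \{0,1,2,\dots\}\cup\{\aleph_0\}$. Given a substitution $\sigma\colon X \to \terms{Y}$ with $|Y|\le n$, composing with the quotient map $\terms{Y}\twoheadrightarrow \FF(Y)$ and using the universal property of $\FF(X,S)$ (which applies precisely because $\sigma$ is an $E$-unifier, so the images of the equations in $S$, together with those in $E$, are satisfied in $\FF(Y)$) yields a well-defined $\LL$-homomorphism $\Phi(\sigma)\colon \FF(X,S)\to \FF(Y)$. Conversely, given $f\colon \FF(X,S)\to \FF(Y)$, pick for each $x\in X$ a term $\Psi(f)(x)\in \terms{Y}$ representing $f$ applied to the image of $x$; one checks this is an $E$-unifier for $(X,S)$. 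The maps $\Phi$ and $\Psi$ are mutually inverse up to the equivalence $\approx_E$ on terms (on the $U$-side) and up to equality of homomorphisms (on the $V$-side), so they descend to a bijection between $U_E(n,X,S)/{\sim}$ and $V_E(n,X,S)/{\sim}$.

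\emph{Order preservation and reflection.} Suppose $\tau \les_E \sigma$, witnessed by $\theta\colon Y_1 \to \terms{Y_2}$ with $\tau(x)\approx_E \widehat\theta(\sigma(x))$ for all $x\in X$. Then $\theta$ induces, as above, a homomorphism $h\colon \FF(Y_1)\to \FF(Y_2)$, and the identity $\tau(x)\approx_E \widehat\theta(\sigma(x))$ translates, after passing to the quotients, into $\Phi(\tau) = h\circ \Phi(\sigma)$, i.e.\ $\Phi(\tau)\les \Phi(\sigma)$. The converse implication is entirely symmetric, using $\Psi$ to turn an intertwining homomorphism $h$ back into a substitution $\theta$ witnessing $\tau\les_E\sigma$. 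Hence $\Phi$ (equivalently, $\Psi$) is an isomorphism of preorders, and therefore an isomorphism of the associated posets.

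\emph{Main obstacle.} There is no deep difficulty here; the content is the careful bookkeeping in verifying that each construction is well-defined and respects the equivalences $\sim$ on both sides. The one point that genuinely uses the hypotheses is that $\Phi(\sigma)$ exists as a homomorphism \emph{out of $\FF(X,S)$} — this is where the unifier condition $\widehat{\sigma(s_i)}\approx_E\widehat{\sigma(t_i)}$ is needed, so that the congruence generated by $E$ and $S$ is collapsed by the map $\terms{X}\to \FF(Y)$ induced by $\sigma$. One must also note that the constraint $|Y|\le n$ is respected in both directions of the correspondence (a substitution into $\terms{Y}$ yields a homomorphism into $\FF(Y)$ with the same $Y$, and vice versa), so the bound on the number of variables is matched on the nose. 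Everything else is a routine unwinding of universal properties.
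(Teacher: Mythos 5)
Your proposal is correct and follows essentially the same route as the paper's own proof: the substitution-to-homomorphism correspondence via the universal property of $\FF(X,S)$, the choice of term representatives for the inverse direction, the check that the composites are the identity up to $\approx_E$, and the verification that witnesses of $\les_E$ translate into intertwining homomorphisms and back. No substantive difference to report.
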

\begin{proof}
    The ensuing argument is just a minor variation of \cite[Theorem 4.1]{Ghi1997}.
    For the sake of the reader, we explicitly provide the bijection.

    To any $E$-unifier $\sigma$ for $S$ with variables in $\varsol$, we associate the unique homomorphism $f_{\sigma}\colon \FF(\varprob,S)\to \FF(\varsol)$ that makes the following diagram commute:
    \[
        \begin{tikzcd}
            X \arrow{r}{} \arrow{d}{\sigma} & \FF(X) \arrow[two heads]{r}{} & \FF(X,S) \arrow[dashed]{d}{f_{\sigma}}\\
            T_Y \arrow[two heads]{rr}{} && \FF(Y),
        \end{tikzcd}
    \]
    i.e., the homomorphism that sends the equivalence class of a term $t$ to the equivalence class of $\widehat{\sigma}(t)$.  
    It is routine to check that $f_{\sigma}$ is a well-defined homomorphism precisely because $\sigma$ is a unifier for $S$. 
    
    Vice versa, let $f\colon \FF(\varprob,S)\to \FF(\varsol)$ be a $\V{E}$-homomorphism.
    For each $x \in X$, we let $[x]$ denote the equivalence class of $x$ in $\FF(\varprob,S)$, and we choose an element $\sigma_f(x) \in \terms{\varsol}$ such that its equivalence class in $\FF(\varsol)$ is $f([x])$.
    The resulting function $\sigma_{f} \colon X \to \terms{\varsol}$ is a $E$-unifier for $S$ with variables in $S$: indeed, if $(s(x_{1},\dots,x_{n}),t(x_{1},\dots, x_{n}))\in S$, then
    \begin{align*}
        \widehat{\sigma_{f}}(s(x_{1},\dots,x_{n}))&\approx_{E}s(f([x_{1}]),\dots,f([x_{n}]))\approx_{E}f\left([s(x_{1},\dots,x_{n})]\right)\\
        &\approx_{E}f\left([t(x_{1},\dots,x_{n})]\right) \approx_{E}t(f([x_{1}]),\dots,f([x_{n}]))\\
        & \approx_{E} \widehat{\sigma_{f}}(t(x_{1},\dots,x_{n})).
    \end{align*}
    We then have the following commutative diagram.
    
    \[
        \begin{tikzcd}
            X \arrow{r}{} \arrow[dashed]{d}{\sigma_f} & \FF(X) \arrow[two heads]{r}{} & \FF(X,S) \arrow{d}{f}\\
            T_Y \arrow[two heads]{rr}{} && \FF(Y)
        \end{tikzcd}
    \]
    
    It is not difficult to see that the composite $V_E(n,X,S) \to U_E(n,X,S) \to V_E(n,X,S)$ is the identity.
    Moreover, it is equally easy to see that the composite $U_E(n,X,S) \to V_E(n,X,S) \to U_E(n,X,S)$ maps a $E$-unifier $\sigma$ to an $E$-unifier $\sigma'$ such that for every $x \in X$ we have $\sigma(x) \approx_E \sigma'(x)$, and so $\sigma \les_E \sigma'$ and $\sigma' \les_E \sigma$.
    
    Both the above-defined maps are order-preserving.
    Indeed, let $\tau$ and $\sigma$ be $E$-unifiers for $S$ with variables in $\varsol_1$ and $\varsol_2$, respectively, and suppose that $\tau \les_E \sigma$.
    Then there is a substitution $\theta\colon \varsol_1\to \terms{\varsol_2}$ such that $\tau(x)\approx_{E}\widehat{\theta}(\sigma(x))$ holds for every $x\in X$.
    This function induces a homomorphism $\FF(\theta) \colon \FF(\varsol_1) \to \FF(\varsol_2)$ (obtained by applying the free functor $\FF$ to $\theta$) which satisfies $\FF(\theta) \circ f_\sigma = f_\tau$, from which we deduce $f_\tau \les f_\sigma$.
    Then, the restriction of $h$ to $\varaux_1$ gives a substitution $h' \colon \varaux_{1} \to \FF(\varaux_2)$ such that $\sigma_f = \widehat{h'} \circ \sigma_g$, whence $\sigma_f \les \sigma_g$.
\end{proof}

%%%%%%%%%%%%%%%%%%%%%%%%%%%%%%%%%%%%%%%SECTION%%%%%%%%%%%%%%%%%%%%%%%%%%%%%%%%%%%%%%%
\section{{\L}ukasiewicz logic, MV-algebras and rational polyhedra}
\label{s:lukasiewicz}
%%%%%%%%%%%%%%%%%%%%%%%%%%%%%%%%%%%%%%%%%%%%%%%%%%%%%%%%%%%%%%%%%%%%%%%%%%%%%%%%%%%%%

The equivalent algebraic semantics of {\L}ukasiewicz logic is provided by MV-algebras.  The standard reference is \cite{CignoliDOttavianoEtAl2000}. An important tool in the study of MV-algebras is the duality between finitely presented MV-algebras and rational polyhedra (see \cite{marra_spada,MSuni} and also \cite{cabrer2017mv}), which we will introduce after briefly recalling some concepts in piecewise geometry needed for the presentation. Since we will only work in the dual category of rational polyhedra, the algebraic properties of MV-algebras have no relevance in this paper.

For any $S\seq \R^{n}$, the \word{convex hull} of $S$ is defined as 
\[
\conv(S)\df \left\{\sum_{i=1}^{k}\lambda_{i}s_{i}\in \R^{n}\mid  k\in \N, s_{i}\in S,\, \lambda_{i}\geq 0 \text{ such that } \sum_{i = 1}^k \lambda_{i} =1 \right\}.
\]
A \word{rational polytope} in $\R^n$ is the convex hull of finitely many points with rational coordinates.  A \word{rational polyhedron} is a finite union of rational polytopes.  
\begin{theorem}[{\cite[Theorem 1.1]{Ziegler1995}}]\label{t:hull-intersection}
    A subset $P \subseteq \R^n$ is the convex hull of finitely many points if and only if it is a bounded intersection of finitely many half-spaces.
\end{theorem}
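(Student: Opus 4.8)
The plan is to establish the two implications separately, in each direction converting between the vertex description and the halfspace description of $P$.

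\emph{From vertices to halfspaces.} Suppose first that $P = \conv(\{v_1,\dots,v_k\})$ for finitely many points $v_1,\dots,v_k \in \R^n$. Boundedness is immediate, since $P$ is contained in any Euclidean ball that contains all of the $v_i$. To produce the halfspaces, I would realise $P$ as a linear image of the standard simplex: $P$ is the image of
\[
\Delta \df \{\lambda \in \R^k : \lambda_1,\dots,\lambda_k \geq 0 \text{ and } \lambda_1 + \dots + \lambda_k = 1\}
\]
under the linear map $\lambda \mapsto \sum_{j=1}^k \lambda_j v_j$ from $\R^k$ to $\R^n$, and $\Delta$ is visibly a bounded intersection of finitely many halfspaces in $\R^k$. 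It therefore suffices to show that the image of a bounded intersection of finitely many halfspaces under a linear map is again an intersection of finitely many halfspaces. This is Fourier--Motzkin elimination: after a linear change of coordinates the map becomes a coordinate projection $\R^k \to \R^n$, and one eliminates the surplus coordinates one at a time, at each step grouping the current inequalities according to the sign of the coefficient of the variable being removed and replacing them by all pairwise sums in which that coefficient cancels, together with the inequalities that did not involve the variable. An induction on the number of eliminated coordinates then closes this direction.

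\emph{From halfspaces to vertices.} Conversely, suppose $P = \bigcap_{i=1}^m \{x \in \R^n : \langle a_i, x \rangle \leq b_i\}$ is bounded (if $P = \emptyset$ it is the convex hull of the empty set of points and we are done, so assume $P \neq \emptyset$). Call $v \in P$ a \emph{vertex} if the active normals at $v$ --- the vectors $a_i$ with $\langle a_i, v \rangle = b_i$ --- span $\R^n$. There are only finitely many vertices: each vertex is the unique solution of some nonsingular $n \times n$ linear system chosen among the equalities $\langle a_i, x \rangle = b_i$, and there are at most $\binom{m}{n}$ such systems. It remains to prove $P = \conv(\vertices(P))$; the inclusion $\supseteq$ is clear, and I would prove $\subseteq$ by induction on $d \df \dim P$, the dimension of the affine hull of $P$. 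If $d = 0$, then $P$ is a single point, which is a vertex. If $d > 0$, fix $x \in P$ and take a line $\ell$ through $x$ inside the affine hull of $P$; since $P$ is bounded and convex, $\ell \cap P$ is a segment $[y,z]$ with $x \in [y,z]$ and both endpoints on the relative boundary of $P$. Each endpoint lies in a proper face of $P$ --- the intersection of $P$ with a supporting hyperplane $\langle a_i, x \rangle = b_i$ for one of the constraints that is tight there --- and such a face is again a bounded intersection of finitely many halfspaces, of dimension strictly less than $d$, whose vertices are vertices of $P$. By the induction hypothesis $y$ and $z$ are convex combinations of vertices of $P$, hence so is $x$.

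\emph{Where the difficulty lies.} The genuinely non-trivial half is the passage from vertices to halfspaces, and the route sketched above does not sidestep its core: proving that the system produced by Fourier--Motzkin elimination has exactly the projection of the original solution set as its solution set. This needs the sign case-analysis together with some bookkeeping, and it is the step I expect to be the main obstacle. The converse direction is routine once the notion of vertex is fixed --- the finiteness count, the dimension induction, and the fact that the relative boundary of a bounded polyhedron is covered by its proper faces are all elementary.
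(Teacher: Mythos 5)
The paper does not prove this statement at all: it is imported verbatim, with a citation, from Ziegler's Theorem~1.1, so the only meaningful comparison is with the proof in \cite{Ziegler1995}. Your sketch is a correct outline of the classical argument, and it agrees with Ziegler's in the direction from vertices to halfspaces (realise $P$ as the image of the standard simplex and project, with Fourier--Motzkin elimination doing the work). In the converse direction, however, you take a genuinely different route: Ziegler homogenises, proves the cone version of the statement by a dual application of Fourier--Motzkin elimination (equivalently via polarity/Farkas), and then dehomogenises, whereas you argue directly through extreme points --- vertices are basic solutions, hence at most $\binom{m}{n}$ of them, and $P=\conv(\vertices(P))$ by induction on $\dim P$, pushing a point along a line to the relative boundary and descending to proper faces. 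Your variant is more elementary for the bounded case (it never leaves $\R^n$ and needs no cones or duality), which is all the statement here requires; Ziegler's uniform cone treatment buys, with the same machinery, the general Minkowski--Weyl decomposition of unbounded polyhedra.

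A few points would need tightening in a written-out version. In the first direction, a linear map $\R^k\to\R^n$ is not a coordinate projection after a change of coordinates of the source alone when it is not surjective; either note that invertible affine changes of the target preserve finite intersections of halfspaces and add the equations cutting out the image, or, more cleanly, apply Fourier--Motzkin to the graph $\{(\lambda,x) : \lambda\in\Delta,\ x=\sum_j\lambda_j v_j\}$ and eliminate the $\lambda$-variables; and, as you yourself flag, the correctness of the elimination step is the real content of that half and must be proved, not invoked. In the converse direction, the claim that vertices of the face $P\cap\{x : a_i\cdot x=b_i\}$ are vertices of $P$ deserves its one-line justification (the added normals $\pm a_i$ already lie in the span of the constraints of $P$ active there; equivalently, your vertices are exactly the extreme points, and extreme points of a face cut out by a valid inequality are extreme in $P$), and the degenerate case in which the chosen line meets $P$ only in the point $x$ should be absorbed into the argument (then $x$ itself lies on the relative boundary and the induction hypothesis applies to a proper face directly). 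None of these is a genuine gap; they are routine completions of a sound plan.
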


Recall that a map $\eta \colon \R^n\to \R$ is said to be \word{affine} if there are $\lambda_0,\lambda_1,\dots,\lambda_n\in \R$ (the \word{coefficients} of $\eta$) such that $\eta(x_1,\dots,x_n)=\lambda_0+\lambda_1x_1+\dots+\lambda_nx_n$ for every $(x_1,\dots,x_n)\in \R^n$. 
\begin{definition}
Let $m,n\in\N$. A function $\eta \colon \R^m\to \R^n$ is called a \word{$\Z$-map} if it is continuous and there are affine maps $\eta_1, \dots, \eta_k$ with integer coefficients such that for any $x\in \R^m$ there is $i\leq k$ for which $\eta(x) = \eta_i(x)$. For $A\seq \R^m$ and $B\seq \R^n$, we also call $\Z$-map any function $\eta'\colon A \to B$, obtained by restricting a $\Z$-map $\eta\colon \R^m \to \R^n$, i.e.\ such that for any $x\in A$, $\eta'(x)=\eta(x)\in B$.
\end{definition}

\begin{theorem}[{\cite[Theorem~3.4]{MSuni}}]\label{d:duality}
    The category of rational polyhedra and $\Z$-maps is dually equivalent to the category of finitely presented MV-algebras and homomorphisms.
\end{theorem}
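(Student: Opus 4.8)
The plan is to produce an explicit contravariant functor $\mathcal M$ from rational polyhedra and $\Z$-maps to finitely presented MV-algebras and homomorphisms, and then to check that it is full, faithful and essentially surjective; a fully faithful, essentially surjective contravariant functor is exactly a dual equivalence. For the construction I would take as given two classical facts about MV-algebras from \cite{CignoliDOttavianoEtAl2000}. The first is McNaughton's theorem: the free MV-algebra $\mathrm{McN}_n$ on $n$ generators is the algebra of all $\Z$-maps $[0,1]^n\to[0,1]$ under pointwise truncated sum and complement, with the coordinate projections as free generators. The second is the MV-algebraic Nullstellensatz: the assignment $P\mapsto\mathfrak i(P)\df\{\zeta\in\mathrm{McN}_n : \zeta|_P\equiv 0\}$ is an inclusion-reversing bijection between the rational polyhedra contained in $[0,1]^n$ and the finitely generated ideals of $\mathrm{McN}_n$, with inverse $\mathfrak j\mapsto\mathcal V(\mathfrak j)\df\{x\in[0,1]^n : \zeta(x)=0 \text{ for all }\zeta\in\mathfrak j\}$, and under it $\mathrm{McN}_n/\mathfrak i(P)$ is isomorphic to the algebra of restrictions to $P$ of elements of $\mathrm{McN}_n$.

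On objects I would define $\mathcal M(P)$, for a rational polyhedron $P\seq\R^n$, to be the MV-algebra of all $\Z$-maps $P\to[0,1]$ with pointwise operations; one checks this does not depend on the ambient space, and the Nullstellensatz identifies it with $\mathrm{McN}_n/\mathfrak i(P)$, so it is finitely presented. On morphisms I would send a $\Z$-map $\lambda\colon Q\to P$ to the precomposition homomorphism $\mathcal M(\lambda)\df({-}\circ\lambda)\colon\mathcal M(P)\to\mathcal M(Q)$; the point to verify here is that $\zeta\circ\lambda$ is again a $\Z$-map, which holds because it is continuous and is affine with integer coefficients on each cell of a common refinement of the subdivisions underlying $\zeta$ and $\lambda$. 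Functoriality is then immediate.

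Faithfulness and fullness both run through the coordinate projections. Taking $P\seq[0,1]^n$ and writing $p_1,\dots,p_n\in\mathcal M(P)$ for their restrictions — which generate $\mathcal M(P)$, since $\mathcal M(P)$ is a quotient of $\mathrm{McN}_n$ — two $\Z$-maps $\lambda_1,\lambda_2\colon Q\to P$ inducing the same homomorphism satisfy $p_i\circ\lambda_1=p_i\circ\lambda_2$ for every $i$, i.e.\ have the same components, hence coincide; this is faithfulness. For fullness, given $h\colon\mathcal M(P)\to\mathcal M(Q)$ with $Q\seq[0,1]^m$, I would put $\lambda\df(h(p_1),\dots,h(p_n))\colon Q\to[0,1]^n$, a $\Z$-map. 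Choosing $f_1,\dots,f_r$ that generate $\mathfrak i(P)$ and writing each $f_k$ as an MV-term in the coordinate projections (legitimate by freeness of $\mathrm{McN}_n$), the identities $f_k|_P=0$ in $\mathcal M(P)$ are carried by $h$ to $f_k\circ\lambda\equiv 0$ on $Q$; since $P$ is exactly the common zero set of $f_1,\dots,f_r$, this forces $\lambda(Q)\seq P$, so $\mathcal M(\lambda)$ is defined, and it agrees with $h$ on the generators $p_i$ and is therefore equal to $h$. Essential surjectivity is then immediate: a finitely presented MV-algebra has the form $\mathrm{McN}_n/\mathfrak j$ with $\mathfrak j$ finitely generated, and $\mathfrak j=\mathfrak i(P)$ for the rational polyhedron $P=\mathcal V(\mathfrak j)$, so the algebra is isomorphic to $\mathcal M(P)$.

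The one genuinely substantial ingredient — and the step I expect to be the real obstacle — is the MV-algebraic Nullstellensatz, in particular its non-formal direction that a finitely generated ideal of $\mathrm{McN}_n$ recaptures every McNaughton function vanishing on its zero set. This requires honest polyhedral combinatorics: McNaughton's representation theorem together with the existence of unimodular triangulations of rational polyhedra, which let one rewrite a function vanishing on a subpolyhedron as a bounded MV-combination of the elementary ``hat'' functions attached to the triangulation, thereby placing it inside the given ideal. At the more elementary level \Cref{t:hull-intersection} is used to see that the zero set of a McNaughton function, and finite intersections of rational polyhedra, are again rational polyhedra. I would import all of this from \cite{CignoliDOttavianoEtAl2000,marra_spada,MSuni} rather than reprove it.
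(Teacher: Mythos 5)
The paper offers no proof of this statement: it is imported wholesale as \cite[Theorem~3.4]{MSuni}, so the only meaningful comparison is with the proof in \cite{MSuni} (which builds on the adjunction of \cite{marra_spada}). Your sketch follows essentially that standard route: dualise by sending a rational polyhedron $P$ to an MV-algebra of $[0,1]$-valued $\Z$-maps on $P$ and a $\Z$-map to precomposition, then get fullness, faithfulness and essential surjectivity from McNaughton's theorem together with the Wójcicki--Hay ``Nullstellensatz'' (equivalently, the semisimplicity of finitely presented MV-algebras), which you correctly single out as the one non-formal ingredient to be imported. The faithfulness and fullness arguments via the restricted coordinate projections, and the identification of zero sets of the generators of $\mathfrak i(P)$ with $P$, are fine.

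There is, however, one circular step you should repair. You define $\mathcal M(P)$ as the algebra of \emph{all} $\Z$-maps $P\to[0,1]$, and you justify that the restricted projections $p_1,\dots,p_n$ generate $\mathcal M(P)$ ``since $\mathcal M(P)$ is a quotient of $\mathrm{McN}_n$''. But the surjectivity of the restriction homomorphism $\mathrm{McN}_n\to\mathcal M(P)$ is exactly the assertion that every $\Z$-map $P\to[0,1]$ extends to a $\Z$-map $[0,1]^n\to[0,1]$, and the Nullstellensatz as you state it (the quotient is isomorphic to the algebra of \emph{restrictions} of McNaughton functions) does not give this; without it, $\mathcal M(P)$ could a priori be strictly larger than $\mathrm{McN}_n/\mathfrak i(P)$, and both your generation claim and essential surjectivity would break. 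The extension property is true and standard --- it can be obtained from the triangulation and interpolation lemmas the paper recalls (\cref{l:triangulation,l:unique-extension}, in the spirit of \cref{l:extend-super}) or cited from \cite{CignoliDOttavianoEtAl2000,MSuni} --- but it must be explicitly invoked; note that simply redefining $\mathcal M(P)$ as the algebra of restrictions does not let you avoid it, since then the same extension fact is needed to see that precomposition along a $\Z$-map $Q\to P$ stays inside the algebra of restrictions.
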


Under the duality of \cref{d:duality}, for each $n\in\N$ the free MV-algebra on $n$ generators corresponds to the Thyconoff cube $[0,1]^n$.
The dual of a unification problem in $m$ variables $S$ is a rational polyhedron $B_S\seq [0,1]^{m}$. The dual of a unifier with $n$ variables for $S$ is a $\Z$-map $\eta\colon [0,1]^{n}\to B_S$. 
Finally, if the $\Z$-maps $\tau\colon [0,1]^{n_1}\to B$ and $\sigma\colon [0,1]^{n_2}\to B$ correspond to some unifiers $\tau'$ and $\sigma'$, respectively, then $\sigma'$ is more general than $\tau'$ if and only if there is a $\Z$-map $\alpha\colon [0,1]^{n_1} \to [0,1]^{n_2}$ such that the following diagram commutes.
\begin{equation*}
\begin{tikzcd}
{[0,1]^{n_2}}\arrow{r}{\sigma}&B\\
{[0,1]^{n_1}}\arrow[swap]{ru}{\tau}\arrow[dashed]{u}{\alpha}&
\end{tikzcd}
\end{equation*}
Thus, in the following we will often say that a $\Z$-map $\sigma$ is more general than a $\Z$-map $\tau$ when the condition above applies. Finally, the dual of the unification problem $x_1 \lor x_2 \lor \lnot x_1 \lor \lnot x_2 \approx 1$ introduced in \Cref{s:introduction} is the boundary $\B$ of the unit square $[0,1]^2$, i.e.,
\[
\conv\{(0,0),(1,0)\}\cup\conv\{(1,0),(1,1)\}\cup\conv\{(1,1),(0,1)\}\cup\conv\{(0,1),(0,0)\}.
\]

Having translated unification problems of {\L}ukasiewicz logic in properties of rational polyhedra and $\Z$-maps, we now recall some standard tools in piece-wise linear geometry.

A set $W\seq \R^n$ is an \word{affine space} if either $W=\emptyset$ or there are $x \in \R^n$ and a vector subspace $V\seq \R^n$ such that $W = x + V$.
In the latter case, we define the \word{direction} of $W$ as the vector space $V$ and the dimension of $W$ as the dimension of $V$, denoted by $\dim{W}$.
Conventionally, $\dim\emptyset\df-1$. 
We also say that any element $v \in V$ is \word{parallel} to the affine space $x + V$.
For $S \subseteq \R^n$, the \word{affine span} of $S$ is the intersection of all the affine spaces in $\R^n$ that contain $S$. The vectors $v_0,\dots,v_k\in \R^n$ are called \word{affinely independent} if the affine span of $\{v_0,\dots,v_k\}$ has dimension $k$.

Let $k\in \{-1,0,1,2,\dots\}$. A \word{rational simplex of dimension $k$} (also called \word{rational $k$-simplex}) $S$ in $\R^n$ is the convex hull of $k+1$ affinely independent $v_0,\dots,v_{k}\in \Q^n$.  Given a rational $k$-simplex $S$, there is a unique ($k+1$)-tuple of points $v_0,\dots,v_{k}$ such that
\[\conv\left(\{v_0,\dots,v_k\}\right)=S.\]
These points are called the \word{vertices} of $S$ and denoted by $\vertices(S)$.

A \word{face} of a simplex $S$ is the convex hull of any subset of $\vertices(S)$.  
A \word{rational} \word{simplicial complex} $\KK$ in $\R^n$ is a finite set of rational simplices in $\R^n$ that satisfies the following conditions:
\begin{enumerate}
\item every face of a simplex in $\KK$ is also in $\KK$;
\item if $S_{1},S_2\in \KK$, then $S_1\cap S_2$ is a face of both $S_1$ and $S_2$.
\end{enumerate}

For every finite set $\mathcal{K}$ of rational simplices in $\R^n$, the union of the simplices of $\mathcal{K}$ is denoted by $\lvert \mathcal{K}\rvert$. 
Every rational simplicial complex $\KK$ is said to be a \word{rational triangulation} of $\lvert\KK\rvert$. 
The \word{vertices} of a rational simplicial complex are exactly the vertices of its simplices; we denote by $\vertices(\KK)$ the set of vertices of $\KK$.

For any $v=(v_1,\dots,v_n)\in \Q^n$, the \word{denominator} of $v$, denoted by $\den(v)$, is the least common multiple of the denominators of $v_1$, \dots, $v_n$. Moreover, we set
\[
\widetilde{v}\df\den(v)(v,1)=(\den(v)v_1,\dots,\den(v)v_n, \den(v))\in \Z^{n+1}.
\]
A simplex $\conv(w_0,\dots,w_k)\seq \R^n$ is said to be \word{regular} if its vertices are rational and the set of integer vectors $\{\widetilde{w_0}, \dots,\widetilde{w_k} \}$ can be extended to a basis of the free abelian group $\Z^{k+1}$. A \word{regular triangulation} is a rational triangulation whose simplices are regular.

\begin{theorem}[{\cite[Thm.~2.2]{RourkeSanderson1972}}]\label{l:triangulation}
For every $\Z$-map $\eta \colon P \to Q$ there is a regular triangulation $\mathcal{K}$ of $P$ such that the restriction of $\eta$ to each simplex of $\mathcal{K}$ is an affine map with integer coefficients.
\end{theorem}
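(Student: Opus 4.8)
The plan is to split the statement into its purely piecewise-linear core and its ``integer/regular'' refinement. First I would produce \emph{some} rational triangulation of $P$ on which $\eta$ is affine on each simplex; then I would check that the affine pieces can always be taken with integer coefficients; and finally I would upgrade the triangulation to a \emph{regular} one without spoiling either property, exploiting that every refinement of an adapted triangulation is still adapted.

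For the first step, fix integer-coefficient affine maps $\eta_1,\dots,\eta_k\colon\R^m\to\R^n$ witnessing that $\eta$ is a $\Z$-map, so that every $x$ satisfies $\eta(x)=\eta_i(x)$ for some $i\leq k$. The existence of a triangulation of $P$ on each simplex of which $\eta$ restricts to an affine map is the piecewise-linear content of \cite[Thm.~2.2]{RourkeSanderson1972} (equivalently, standard PL simplicial-approximation machinery applied to the graph of $\eta$ inside $\R^{m+n}$); moreover the triangulation can be taken rational, because all the data involved — the vertices of $P$ and the coefficients of the $\eta_i$ — are rational, so every auxiliary subdivision needed to separate the linearity domains is cut out by rational hyperplanes, cf.\ \Cref{t:hull-intersection}. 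Call the resulting rational triangulation $\mathcal{K}_0$. On any simplex $\sigma$ of $\mathcal{K}_0$, some single $\eta_i$ must agree with $\eta$ on a relatively open subset of $\sigma$ (pigeonhole over the finitely many indices), and two affine maps that agree on a relatively open subset of an affine subspace agree on the whole subspace; hence $\eta$ coincides on $\sigma$ with the integer-affine map $\eta_i$.

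For the last step, recall that any rational triangulation can be refined to a regular one by a finite sequence of stellar subdivisions at suitably chosen rational points — the De~Concini--Procesi desingularization of rational fans, which is the standard tool for producing regular triangulations in this setting. Applying it to $\mathcal{K}_0$ yields a regular triangulation $\mathcal{K}$ with $\lvert\mathcal{K}\rvert=\lvert\mathcal{K}_0\rvert=P$. Since $\mathcal{K}$ refines $\mathcal{K}_0$, each simplex of $\mathcal{K}$ is contained in some simplex $\sigma$ of $\mathcal{K}_0$, and the integer-affine map that equals $\eta$ on $\sigma$ still equals $\eta$ on the smaller simplex; therefore the restriction of $\eta$ to each simplex of $\mathcal{K}$ is affine with integer coefficients, which is exactly the assertion.

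The genuinely nontrivial ingredient is the desingularization step, and the reason it costs nothing here is the monotonicity observation: the property ``$\eta$ is integer-affine on each simplex'' is preserved under passing to any refinement, so one is free to buy regularity by subdividing further. By contrast, the first step is routine PL bookkeeping, whose only delicate point is ensuring that every auxiliary subdivision stays within the rationals.
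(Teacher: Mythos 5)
Your argument is correct in substance, but note that the paper offers no proof of this statement at all: it is quoted as a known result, attributed to Rourke--Sanderson. What you have done is reconstruct the standard proof from the literature, and your decomposition is the right one: (a) a rational triangulation of $P$ adapted to $\eta$ (this, and only this, is the genuinely Rourke--Sanderson/PL part); (b) the observation that on each simplex the affine piece must be one of the integer-coefficient witnesses $\eta_i$ (your pigeonhole-plus-open-agreement argument is fine, since a simplex is covered by the finitely many closed sets $\{x:\eta(x)=\eta_i(x)\}$, one of which therefore has nonempty relative interior, and two affine maps agreeing on a relatively open subset of the affine hull of the simplex agree on the whole simplex); and (c) refinement to a \emph{regular} (unimodular) triangulation, which is preserved by your monotonicity remark. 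The one point worth flagging is that step (c) is precisely the part that is \emph{not} in Rourke--Sanderson, whose Theorem~2.2 is purely piecewise-linear and knows nothing about denominators or unimodularity; the regular refinement comes from the desingularization procedure for rational fans/triangulations (in the MV-algebraic literature it is usually carried out via iterated stellar subdivisions at Farey mediants, as in Mundici's work, rather than cited as De~Concini--Procesi, but it is the same mechanism). So your proof is, if anything, more explicit than the paper about where the arithmetic content enters. The only mildly hand-waved point is the simultaneous rationality and adaptedness of $\mathcal{K}_0$; this is standard (the linearity loci are cut out by rational hyperplanes, and a rational polyhedral complex admits a rational triangulation, cf.\ \cref{l:strong-triangulation}), but a careful write-up would say a word about why the cells on which $\eta$ agrees with a fixed $\eta_i$ form a rational polyhedral decomposition before triangulating it.
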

\begin{lemma}\label{l:strong-triangulation}
For every finite set $\KK$ of rational polyhedra in $\R^n$, there is a regular triangulation $\Delta$ of $\lvert \KK\rvert$ such that every element of $\KK$ is a union of simplices of $\Delta$.
\end{lemma}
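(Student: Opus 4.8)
The plan is to derive $\Delta$ from a single application of \Cref{l:triangulation}, by encoding the whole family $\KK$ inside one auxiliary $\Z$-map and then using that a regular triangulation on which a $\Z$-map is piecewise affine with integer coefficients is automatically adapted to that map's zero set.

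First I would realise each member of $\KK$ as the zero set of a nonnegative $\Z$-map on $\R^n$. Write $\KK=\{P_1,\dots,P_k\}$. Each $P_i$ is a finite union of rational polytopes $Q_{i,1},\dots,Q_{i,r_i}$, and by \Cref{t:hull-intersection} — together with the standard fact that a rational polytope admits a defining system of rational (hence, after clearing denominators, integer) inequalities — we may write $Q_{i,t}=\{x\in\R^n : \langle a,x\rangle\leq b \text{ for all } (a,b)\in L_{i,t}\}$ for a finite set $L_{i,t}\seq\Z^n\times\Z$. Then
\[
h_{i,t}(x) \df \max_{(a,b)\in L_{i,t}} \max\{0,\langle a,x\rangle - b\},
\qquad
g_i \df \min_{1\leq t\leq r_i} h_{i,t}
\]
are $\Z$-maps $\R^n\to\R$, since finite minima and maxima of affine maps with integer coefficients are $\Z$-maps; they are everywhere nonnegative, and $g_i^{-1}(0)=\bigcup_{t}Q_{i,t}=P_i$. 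Bundling the coordinates, $\eta\df(g_1,\dots,g_k)\colon\R^n\to\R^k$ is again a $\Z$-map.

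Next, note that $\lvert\KK\rvert=P_1\cup\dots\cup P_k$ is a compact rational polyhedron, so $\eta$ restricts to a $\Z$-map $\lvert\KK\rvert\to[0,N]^k$ for a suitably large $N\in\N$. Applying \Cref{l:triangulation} to it yields a regular triangulation $\Delta$ of $\lvert\KK\rvert$ on which $\eta$, and hence each $g_i$, is affine with integer coefficients on every simplex. It then remains to observe that for every simplex $\sigma\in\Delta$ the restriction $g_i|_\sigma$ is affine and nonnegative, so it vanishes precisely on the face of $\sigma$ spanned by the vertices of $\sigma$ at which $g_i$ takes the value $0$; consequently $P_i\cap\sigma=g_i^{-1}(0)\cap\sigma$ is a face of $\sigma$, hence a simplex of $\Delta$. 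Since $\Delta$ covers $\lvert\KK\rvert\supseteq P_i$, this gives $P_i=\bigcup_{\sigma\in\Delta}(P_i\cap\sigma)$, a union of simplices of $\Delta$; as $i$ was arbitrary, $\Delta$ is the required triangulation.

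I do not expect a genuine obstacle in this argument — it is driven entirely by \Cref{l:triangulation}. The points that will need (routine) verification are that finite $\min$, $\max$, and tupling of affine maps with integer coefficients are $\Z$-maps, that the restriction of a $\Z$-map to a rational polyhedron is again a $\Z$-map whose image, being bounded, can be enclosed in a rational polytope such as $[0,N]^k$, and the elementary fact that a nonnegative affine function on a simplex vanishes exactly on one of its faces.
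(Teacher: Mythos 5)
Your argument is correct, but it takes a genuinely different route from the paper. The paper's proof is essentially a citation: it invokes the proof of \cite[Proposition 1]{Mun2008}, which gives the statement for families of rational polyhedra inside $[0,1]^n$, and extends it to $\R^n$ by covering $\lvert\KK\rvert$ with integer translates of the unit cube. You instead deduce the lemma from \cref{l:triangulation} alone, by encoding each $P_i$ as the zero set of a nonnegative $\Z$-map $g_i$ built from an integer-coefficient $\max$/$\min$ combination of the defining inequalities (the rational H-representation you need is indeed standard and is also used implicitly in the paper's proof of \cref{l:retract}), bundling these into a single $\Z$-map $\eta=(g_1,\dots,g_k)$ on $\lvert\KK\rvert$, and observing that on each simplex of a triangulation adapted to $\eta$ the function $g_i$ is affine and nonnegative, hence vanishes exactly on a face, so that $P_i$ is a union of simplices of $\Delta$. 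The routine facts you flag (closure of $\Z$-maps under finite $\max$, $\min$ and tupling; compactness of $\lvert\KK\rvert$ so the codomain can be taken to be a rational polytope $[0,N]^k$; a nonnegative affine function on a simplex vanishing precisely on the face spanned by its zero vertices) are all correct. What your approach buys is self-containedness relative to the paper's stated lemmas, avoiding the appeal to Mundici's Proposition 1 and the cube-translation argument; what it costs is that all the depth is shifted onto the regularity claim packaged into \cref{l:triangulation}, which in the literature is itself obtained by a nontrivial desingularization of a rational triangulation, whereas the paper's source proves the adapted regular triangulation directly.
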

\begin{proof}
The proof of \cite[Proposition 1]{Mun2008} gives the result for the case in which every element of $\KK$ is contained in $[0,1]^n$. However, the slightly more general result in the statement can be obtained in an analogous way by covering $\lvert \KK\rvert$ with translations of the unit cube by vectors with integer coordinates.
\end{proof}

\begin{lemma}\label{l:unique-extension}
Let $P\seq\R^n$ be a rational polyhedron, $\Delta$ a regular triangulation of $P$ and $f\colon \vertices(\Delta)\to \Q^m$ a function. If $\den(f(v))$ divides $\den(v)$ for every $v\in \vertices(\Delta)$, then $f$ can be uniquely extended to a $\Z$-map $\eta\colon P\to \R^m$ that is affine on each simplex of $\Delta$.
\end{lemma}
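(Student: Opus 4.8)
The plan is to construct the extension $\eta$ simplex by simplex using the barycentric-coordinate structure of a regular triangulation, and then verify that the piecewise definition glues into a well-defined $\Z$-map. First I would recall that for a simplex $S = \conv(v_0,\dots,v_k) \in \Delta$, every point $x \in S$ has a \emph{unique} representation $x = \sum_{i=0}^k \lambda_i v_i$ with $\lambda_i \geq 0$ and $\sum_i \lambda_i = 1$ (uniqueness because $v_0,\dots,v_k$ are affinely independent). Define $\eta|_S(x) \df \sum_{i=0}^k \lambda_i f(v_i)$. This is manifestly the unique affine map on $S$ agreeing with $f$ on $\vertices(S)$, so uniqueness of the whole extension (given that it must be affine on each simplex) is immediate, and continuity on each closed simplex is clear. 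To see that the pieces agree on overlaps, note that if $S_1, S_2 \in \Delta$ then $S_1 \cap S_2$ is a common face, hence spanned by a subset of the shared vertices; since both $\eta|_{S_1}$ and $\eta|_{S_2}$ restrict on that face to the unique affine interpolation of $f$ at those vertices, they coincide. Thus $\eta \colon P \to \R^m$ is a well-defined continuous function that is affine on each simplex of $\Delta$.

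The substantive point — and the step I expect to be the main obstacle — is showing that $\eta$ is a $\Z$-map, i.e.\ that the affine map $\eta|_S$ has \emph{integer} coefficients for each simplex $S = \conv(v_0,\dots,v_k) \in \Delta$. This is exactly where the regularity of $\Delta$ and the divisibility hypothesis $\den(f(v)) \mid \den(v)$ enter. I would argue by passing to homogeneous coordinates: the affine map $\eta|_S$ lifts to the linear map $L \colon \R^{k+1} \to \R^{m+1}$ (after choosing coordinates adapted to $S$) determined by $L(\widetilde{v_i}) = \widetilde{f(v_i)}$ for $i = 0,\dots,k$ — this is the correct normalization because $\widetilde{v_i} = \den(v_i)(v_i,1)$ and $\widetilde{f(v_i)} = \den(f(v_i))(f(v_i),1)$, and the divisibility $\den(f(v_i)) \mid \den(v_i)$ guarantees that the scalar $\den(f(v_i))/\den(v_i)$ by which the last coordinate is rescaled makes $L$ send $\widetilde{v_i}$ to an \emph{integer} vector, namely $\widetilde{f(v_i)}$ times that integer... more carefully: one checks that the homogenization of $\eta|_S$ sends $\widetilde{v_i}$ to $\bigl(\den(v_i)/\den(f(v_i))\bigr)\widetilde{f(v_i)} \in \Z^{m+1}$, using precisely the divisibility hypothesis.

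Now regularity of $S$ says that $\{\widetilde{v_0},\dots,\widetilde{v_k}\}$ extends to a $\Z$-basis of $\Z^{k+1}$; extend it by vectors $u_{k+1},\dots,u_{n}$ (working in the ambient $\Z^{n+1}$, or in $\Z^{k+1}$ after a unimodular change of coordinates). Define a linear map $\Z^{n+1} \to \Z^{m+1}$ on this basis by sending each $\widetilde{v_i}$ to the integer vector computed above and the $u_j$ to anything integral (say $0$). Since $\{\widetilde{v_i}\} \cup \{u_j\}$ is a $\Z$-basis, this linear map is well-defined over $\Z$ and hence is given by an integer matrix; restricting to the hyperplane of homogeneous coordinates and dehomogenizing recovers $\eta|_S$, which therefore has integer coefficients. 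Applying \cref{l:triangulation}-style reasoning is unnecessary here since the triangulation is already given; the only remaining bookkeeping is to confirm that the integer affine maps $\eta_1,\dots,\eta_r$ obtained on the finitely many simplices of $\Delta$ jointly witness the $\Z$-map condition (for every $x \in P$, picking any simplex containing $x$ gives an index $i$ with $\eta(x) = \eta_i(x)$), which is immediate from the construction. I would flag the homogeneous-coordinate computation in the previous paragraph as the one place where a reader should check the arithmetic of denominators carefully.
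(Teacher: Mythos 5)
Your proposal is correct. For the record, the paper does not prove \cref{l:unique-extension} in-text: it simply cites \cite[Lemma 5.1]{MunCab2012} and remarks that the argument there (stated for polyhedra in $[0,1]^n$) adapts to $\R^n$. Your argument is, in substance, the standard proof of that cited lemma, so you are supplying exactly the details the paper delegates to the literature: barycentric interpolation on each simplex, agreement on common faces since the intersection of two simplices of $\Delta$ is a common face, uniqueness because an affine map on a simplex is determined by its vertex values, and integrality of the coefficients via homogenization together with the regularity hypothesis. Your key computation is right: writing $(x,1)=\sum_i \tfrac{\lambda_i}{\den(v_i)}\widetilde{v_i}$ for $x\in S$, the divisibility $\den(f(v_i))\mid\den(v_i)$ makes $\den(v_i)(f(v_i),1)=\tfrac{\den(v_i)}{\den(f(v_i))}\widetilde{f(v_i)}$ integral, and since $\{\widetilde{v_0},\dots,\widetilde{v_k}\}$ extends to a $\Z$-basis of $\Z^{n+1}$ (you correctly read the paper's ``$\Z^{k+1}$'' as a slip for $\Z^{n+1}$), the map sending $\widetilde{v_i}$ to these vectors and the complementary basis vectors $u_j$ to $0$ is given by an integer matrix. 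One small point of hygiene: because the $u_j$ are sent to $0$, the resulting linear map need not preserve the homogenizing coordinate off the affine hull of $S$, but this is harmless — on $S$ the last coordinate of the image of $(x,1)$ is $\sum_i\lambda_i=1$ automatically, and in any case one only needs the first $m$ rows of the matrix, which already give a global integer affine map agreeing with $\eta$ on $S$; the finitely many such maps witness the $\Z$-map condition as you say.
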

\begin{proof}
This was established in \cite[Lemma 5.1]{MunCab2012} for polyhedra in $[0,1]^n$, but a similar argument also works for polyhedra in $\R^n$. 
\end{proof}

Finally, we recall that the \word{relative interior} of a set $S$ is defined as the interior of $S$ within its affine hull.
Given any presentation of a $k$-simplex $S$ in $\R^n$ as the solution set of a system of $n-k$ affine equations and $k + 1$ affine inequalities, the relative interior of $S$ is the solution set of the same equations and the strict versions of the affine inequalities.

%%%%%%%%%%%%%%%%%%%%%%%%%%%%%%%%%%%%%%%SECTION%%%%%%%%%%%%%%%%%%%%%%%%%%%%%%%%%%%%%%%
\section{Squeezing lemma}
\label{s:squeeze}
%%%%%%%%%%%%%%%%%%%%%%%%%%%%%%%%%%%%%%%%%%%%%%%%%%%%%%%%%%%%%%%%%%%%%%%%%%%%%%%%%%%%%

We are now going to prove a series of lemmas on polyhedra and $\Z$-maps.  Our final aim is to show that, for every $n\geq 2$, any $\Z$-map $[0,1]^n \to \B$ that is not constant on the boundary of $[0,1]^n$ can be slightly modified to obtain a strictly more general $\Z$-map $[0,1]^n \to \B$ that is still not constant on the boundary of $[0,1]^n$; this is the content of \cref{t:exists-upper-bound} below. 
\begin{lemma}\label{l:exists-rational}
Let $Q$ be a rational polyhedron in $\R^n$ and $P$ a closed subset of $\R^n$. If $Q\setminus P\neq \emptyset$ then there is a rational element in $Q\setminus P$.
\end{lemma}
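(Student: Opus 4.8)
The plan is to argue by induction on $n$, or more directly by finding a full-dimensional piece of $Q$ on which $P$ cannot be dense. First I would invoke \cref{l:strong-triangulation} (with $\KK = \{Q\}$) to obtain a regular triangulation $\Delta$ of $Q$, so that $Q$ is a finite union of rational simplices $S_1, \dots, S_k$. Since $Q \setminus P \neq \emptyset$, there is a point $q \in S_j$ for some $j$ with $q \notin P$; as $P$ is closed, there is an open ball $U$ around $q$ with $U \cap P = \emptyset$. The key observation is then that the relative interior of $S_j$ meets $U$: indeed $q$ lies in $S_j = \conv(\vertices(S_j))$, and moving slightly toward the barycenter of $S_j$ keeps us inside $S_j$ and, for a small enough move, inside $U$. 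Hence there is a point $q'$ in the relative interior of $S_j$ with $q' \notin P$, and again a ball $U'$ around $q'$ with $U' \cap P = \emptyset$ and $U' \cap (\text{affine span of } S_j) \subseteq S_j$.

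The remaining step is to produce a rational point in $U' \cap (\text{affine span of } S_j)$. Write $S_j = \conv(v_0, \dots, v_d)$ with the $v_i \in \Q^n$ affinely independent; its affine span is $\{\sum_{i=0}^d \mu_i v_i : \sum_i \mu_i = 1\}$, and the relative interior corresponds to all $\mu_i > 0$. The point $q'$ has barycentric coordinates $(\mu_0, \dots, \mu_d)$ with each $\mu_i > 0$; since the map $(\mu_0, \dots, \mu_d) \mapsto \sum_i \mu_i v_i$ from the affine hyperplane $\{\sum \mu_i = 1\} \subseteq \R^{d+1}$ to the affine span of $S_j$ is an affine homeomorphism, rational points in the hyperplane near $(\mu_0, \dots, \mu_d)$ map to rational points of the affine span near $q'$ (here we use that the $v_i$ are rational, so the matrix of the map is rational). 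Choosing rational $\mu_i' > 0$ close enough to $\mu_i$ with $\sum_i \mu_i' = 1$ — for instance by perturbing within the rational points of the hyperplane, which are dense in it — gives a rational point $q'' = \sum_i \mu_i' v_i$ lying in the relative interior of $S_j \subseteq Q$ and inside $U'$, hence $q'' \notin P$. This $q''$ is the desired rational element of $Q \setminus P$.

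The only mild subtlety, and the step I would be most careful about, is ensuring that the perturbation stays \emph{simultaneously} inside $Q$ and outside $P$: working in the relative interior of a single simplex $S_j$ of a triangulation handles the first requirement automatically (a small ball in the affine span around a relative interior point stays in $S_j$), while the second is handled by the openness of $\R^n \setminus P$. Everything else is the standard fact that $\Q^{d}$ is dense in $\R^{d}$, transported through the rational affine isomorphism given by the barycentric-coordinate parametrisation of a rational simplex.
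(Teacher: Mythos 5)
Your proposal is correct and follows essentially the same route as the paper's proof: triangulate $Q$ into rational simplices via \cref{l:strong-triangulation}, perturb the barycentric coordinates of the chosen point to nearby rational ones (which yield rational points of the simplex since the vertices are rational), and use the openness of $\R^n\setminus P$ to conclude. Your extra step of first nudging the point toward the barycenter so as to work in the relative interior is a harmless simplification of the bookkeeping the paper does explicitly with its inequalities on the perturbations $\delta_2,\dots,\delta_m$.
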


\begin{proof}
Let $x\in Q\setminus P$, which is non-empty by hypothesis. By \cref{l:strong-triangulation} there are rational simplices $S_1,\dots,S_n$ such that $Q=S_1\cup\dots\cup S_n$. Thus, there is $i\in \{1,\dots,n\}$  such that $x\in S_i\setminus P$. Let $\vertices{S_{i}}=\{v_1,\dots,v_m\}$ and let
\[\Delta_m\df\big\{(a_1,\dots,a_m)\in [0,1]^m\mid a_1+\dots+a_m=1 \big\}.\]
It follows that there is $(\lambda_1,\dots,\lambda_m)\in \Delta_m$ such that $x=\lambda_1 v_1+\dots +\lambda_m v_m$.
Using the density of rational numbers, for every $\varepsilon>0$ one can find $\delta_2, \dots, \delta_m\in(-\varepsilon,\varepsilon)$ such that 
\begin{align}\label{eq:delta-epsilon}
\lambda_1-1\leq\delta_2+\dots+\delta_m\leq \lambda_1\text{ and }\lambda_2+\delta_2,\dots,\lambda_m+\delta_m\in \Q\cap[0,1].
\end{align}
Notice that $\lambda_1-(\delta_2+\dots+\delta_m)$ must also be a rational number, because 
\begin{align*}
\lambda_1-(\delta_2+\dots+\delta_m)&=1-(\lambda_2+\dots+\lambda_m)-(\delta_2+\dots+\delta_m)\\
&=1-((\lambda_2+\delta_2)+\dots+(\lambda_m+\delta_m))
\end{align*}
and the latter is a sum of rational numbers.
Moreover, by \cref{eq:delta-epsilon},  $0\leq \lambda_1-(\delta_2+\dots+\delta_m)\leq 1$.
Therefore, there are rational points in $\Delta_m$ arbitrarily close to $(\lambda_1,\dots,\lambda_m)$.
Since the map from $\Delta_m$ to $Q$ defined by $(\alpha_1, \dots, \alpha_m) \mapsto \alpha_1 v_1 + \dots + \alpha_m v_m$ is continuous and maps rational points to rational points, there are rational points in $Q$ arbitrarily close to $\lambda_1 v_1 + \dots + \lambda_m v_m = x$.
Since the complement of $P$ is open and contains $x$, it must also contain one of these points.
\end{proof}

\begin{lemma}\label{l:extend-super}
Let $P$ and $Q$ be rational polyhedra in $\R^n$ with $P$ strictly included in $Q$. For any $z \in \Z$, 
every $\Z$-map from $P$ to $\R$ can be extended to a $\Z$-map from $Q$ to $\R$ whose image contains $z$.
\end{lemma}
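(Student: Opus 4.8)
The plan is to reduce to the one-dimensional situation on a segment and then triangulate. First I would use \cref{l:exists-rational} to pick a rational point $q \in Q \setminus P$: indeed $Q \setminus P$ is non-empty since $P$ is strictly included in $Q$, and $P$, being a rational polyhedron, is closed, so the hypotheses of \cref{l:exists-rational} are met. Fix also any rational point $p \in P$ (if $P = \emptyset$ the statement is easy: extend by \cref{l:unique-extension} after triangulating $Q$ and prescribing the value $z$ at a suitable vertex of high enough denominator, or simply take the constant $\Z$-map $z$). Since $P$ is strictly included in $Q$ and $Q$ is convex? — no, $Q$ need not be convex, so instead I would work with the segment $[p,q]$, which need not lie in $Q$ either. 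The cleaner route is to build the extension abstractly using \cref{l:unique-extension}: the given $\Z$-map $g \colon P \to \R$ is affine on the simplices of some regular triangulation of $P$; by \cref{l:strong-triangulation} applied to the finite family consisting of $P$, $Q$, and the segment $\conv\{p,q\}$ (all rational polyhedra), extend this to a regular triangulation $\Delta$ of $\lvert\Delta\rvert \supseteq Q$ refining a triangulation of $P$ on which $g$ is simplexwise affine, and such that $Q$, $P$ and $\conv\{p,q\}$ are each unions of simplices of $\Delta$.

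Now I define the extension on $\vertices(\Delta)$. On the vertices lying in $P$, keep the values of $g$. On the remaining vertices I want to assign rational values, respecting the denominator-divisibility condition of \cref{l:unique-extension}, in such a way that the value $z$ is attained somewhere on $Q$. The key observation is that $q \in Q \setminus P$ is a rational point, so it lies in the relative interior of some simplex $\sigma \in \Delta$ all of whose vertices $w_0, \dots, w_k$ are outside $P$ (if $q$ were in a simplex meeting $P$, refine further, or note $q \notin P$ and $P$ is a union of simplices of $\Delta$ so the minimal simplex containing $q$ is disjoint from $P$). I would simply set the value at each such free vertex $w_i$ — and more generally at every vertex of $\Delta$ not in $P$ — to be $z$ itself; since $z \in \Z$ has denominator $1$, which divides $\den(w_i)$, the divisibility hypothesis of \cref{l:unique-extension} is satisfied trivially. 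Then the unique $\Z$-map $\eta \colon \lvert\Delta\rvert \to \R$ extending this vertex assignment agrees with $g$ on $P$ (uniqueness of the simplexwise-affine extension forces $\eta|_P = g$, since $g$ is already simplexwise affine on the restricted triangulation), restricts to a $\Z$-map on $Q$, and takes the constant value $z$ on the simplex $\sigma$, so in particular $z \in \eta(Q)$.

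The one point requiring care — and the main obstacle — is the interface between $P$ and its complement inside $\Delta$: a simplex of $\Delta$ may have some vertices in $P$ (where we keep $g$'s values) and some outside (where we impose the value $z$), and we must be sure that the resulting simplexwise-affine map is well-defined and integral, i.e.\ that \cref{l:unique-extension} applies on all of $\lvert\Delta\rvert$. This is automatic: \cref{l:unique-extension} only requires that for each vertex $v$ the denominator of the assigned value divides $\den(v)$; for vertices in $P$ this holds because $g$ is itself a $\Z$-map on $P$ (so its vertex values already satisfy this), and for the others it holds because the assigned value $z$ is an integer. Hence $\eta$ is a bona fide $\Z$-map extending $g$, and restricting it to $Q \subseteq \lvert\Delta\rvert$ (a union of simplices of $\Delta$) yields the desired extension $Q \to \R$ whose image contains $z$. $\qed$
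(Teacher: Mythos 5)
Your overall strategy is the same as the paper's: pick a rational point $q \in Q \setminus P$ via \cref{l:exists-rational}, triangulate compatibly with a triangulation of $P$ on which the given map $g$ is affine (\cref{l:triangulation} plus \cref{l:strong-triangulation}), assign $g$'s values at the vertices lying in $P$ and the integer $z$ at the remaining vertices, and extend by \cref{l:unique-extension}; the denominator checks and the argument that the extension restricts to $g$ on $P$ are fine. One small misstatement: to conclude that the simplexwise-affine extension agrees with $g$ on $P$, the family fed to \cref{l:strong-triangulation} must contain the simplices of the triangulation of $P$ on which $g$ is affine (the paper applies the lemma to $\KK \cup \{Q,\{s\}\}$), not just the polyhedron $P$ itself; you do state the property of $\Delta$ you need, so this is easily repaired.

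The genuine gap is your justification that the value $z$ is attained. From ``$q \notin P$ and $P$ is a union of simplices of $\Delta$'' it does not follow that the minimal simplex $\sigma \in \Delta$ containing $q$ is disjoint from $P$, nor that all of its vertices avoid $P$: only the relative interior of $\sigma$ must miss $P$, while proper faces of $\sigma$ --- in particular some vertices --- may lie in $P$ (for instance, $q$ may sit in the open part of an edge one of whose endpoints belongs to $P$). In that case your vertex assignment makes the extension on $\sigma$ an interpolation between $g$-values and $z$, and $z$ need not be in the image. The fix is precisely the paper's device: add the singleton $\{q\}$ to the family passed to \cref{l:strong-triangulation}, so that $q \in \vertices(\Delta)$ and the extension takes the value $z$ at $q$ by construction. (Incidentally, your inclusion of the segment $\conv\{p,q\}$ in the family already forces $q$ to be a vertex of $\Delta$, because $q$ is an extreme point of that segment and the segment must be a union of simplices of $\Delta$; but you never draw this conclusion, and the fallback ``refine further'' is left unexecuted.)
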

\begin{proof}
Let $\eta \colon P \to \R$ be a $\Z$-map.
By \cref{l:triangulation} there is a triangulation $\KK$ of $P$ such that $\eta$ is affine over each simplex of $\KK$. Let $s$ be a rational element of $Q \setminus P$, whose existence is guaranteed by \cref{l:exists-rational}.
By an application of \cref{l:strong-triangulation} to $\KK\cup\{Q,\{s\}\}$, there is a regular triangulation $\Delta$ of $Q$ such that $s\in \vertices(\Delta)$ and every simplex of $\KK$ is a union of simplices of $\Delta$. We define
\begin{align*}
    f \colon \vertices(\Delta) & \longrightarrow \R\\
    v& \longmapsto {\begin{cases}
    \eta(v)&\text{if }v\in P,\\
    z&\text{otherwise.}
    \end{cases}}
\end{align*}
Since $\eta$ is a $\Z$-map and $z$ has denominator $1$, $f(v)$ is rational and $\den(f(v))$ divides $\den(v)$ for every $v \in \vertices(\Delta)$. Thus, by \cref{l:unique-extension}, $f$ can be uniquely extended to a $\Z$-map $\theta\colon Q\to \R$ that is affine on each simplex of $\Delta$. Since $\eta$ is affine over every simplex of $\KK$ and every simplex of $\KK$ is a union of simplices of $\Delta$, the map $\theta$ extends $\eta$. Moreover, $\theta(s) = f(s) = z$.
\end{proof}

\begin{lemma}\label{l:hyperplane}
Let $t,v,w\in\Q^n$, with $v\neq 0$. For every $\eps>0$, there are rational numbers $\delta, \tau\in(0,\eps)$ such that $\den(t+\delta v)=\den(t+\delta v +\tau w)$.
\end{lemma}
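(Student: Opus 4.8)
\emph{The plan.} If $w=0$ the statement is trivial (take any $\delta,\tau\in(0,\eps)\cap\Q$), so assume $w\neq 0$. Put $E\df\den(t)$, $D\df\den(v)$, $N\df\den(w)$, and let $w'\in\Z^n$ be the primitive integer vector with $w=\lambda w'$ for some $\lambda\in\Q_{>0}$; thus the coordinates of $w'$ have greatest common divisor $1$ and $\lambda\geq 1/N$. The key idea is to take $\delta$ of the very special form $\delta=1/q$ for a prime $q$ chosen large — large enough that $q$ is coprime to $ED$ and to every numerator of a coordinate of $v$, and that $q>\max\{1/\eps,\,2EDN/\eps\}$. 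With this choice $\delta=1/q\in(0,\eps)$, and I claim the denominator $d\df\den(t+\delta v)$ factors as $d=q\,d'$ with $q\nmid d'$ and $d'\mid ED$; in particular $d'$ is bounded independently of $q$. Indeed $(t+\delta v)_i=(qt_i+v_i)/q$, and a count of the exponent of $q$ (using that $q$ is coprime to $E$, $D$, and the numerators of $v$) shows this rational has denominator divisible by $q$ exactly once when $v_i\neq 0$ and coprime to $q$ when $v_i=0$; since $v\neq 0$, taking the coordinatewise least common multiple gives that $q$ divides $d$ exactly once, and clearing denominators coordinatewise gives $d\mid qED$, confirming the factorisation.

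Granting this, write $t+\delta v=\tfrac1d b$ with $b\in\Z^n$ and $\gcd(b_1,\dots,b_n,d)=1$. The points $t+\delta v+sw$ ($s\in\R$) lying in $\tfrac1d\Z^n$ are exactly those with $s=k/(d\lambda)$, $k\in\Z$, and then $t+\delta v+\tfrac{k}{d\lambda}w=\tfrac1d(b+kw')$, whose denominator is $d/\gcd(b_1+kw'_1,\dots,b_n+kw'_n,d)$; so it remains to find a \emph{small} $k\geq 1$ making this gcd equal to $1$. For each prime $p\mid d$, since the coordinates of $w'$ have gcd $1$ there is an $i$ with $p\nmid w'_i$, whence $p\nmid b_i+kw'_i$ for every $k$ outside a single residue class modulo $p$; thus the admissible $k$ form all but at most one class mod $p$. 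By the Chinese Remainder Theorem pick $\rho\in\{1,\dots,\operatorname{rad}(d')\}$ admissible for every prime dividing $d'$ (here $\operatorname{rad}$ denotes the product of the distinct prime divisors); since $\rho$ and $\rho+\operatorname{rad}(d')$ are distinct modulo $q$ (as $\operatorname{rad}(d')\leq ED<q$), one of them — call it $k^*$ — is also admissible for $q$, so $\gcd(b+k^*w',d)=1$, i.e.\ $t+\delta v+\tfrac{k^*}{d\lambda}w$ has denominator exactly $d=\den(t+\delta v)$. Finally $k^*\leq 2\operatorname{rad}(d')\leq 2ED$, so $\tau\df k^*/(d\lambda)\leq 2EDN/q<\eps$ (using $d\geq q$ and $\lambda\geq 1/N$), while $\tau>0$ and $\delta>0$; this proves the lemma.

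\emph{Expected main obstacle.} The subtle point is not finding a small perturbation that lands back in $\tfrac1d\Z^n$ — that is routine — but ensuring the perturbed point has denominator \emph{exactly} $d$ rather than a proper divisor, \emph{while} keeping $\tau$ below $\eps$; for a generic $\delta$ these requirements pull against each other, since $d$ could then have many prime factors and the least admissible $k$ (hence $\tau=k/(d\lambda)$) would be forced to be comparatively large. Taking $\delta=1/q$ is precisely what confines the whole of $d$ except the single large prime $q$ inside the fixed integer $ED$, which bounds $k^*$ by $2ED$ independently of $q$; then simply enlarging $q$ makes $\tau$ as small as needed. The care in the argument is localised in the exponent‑of‑$q$ bookkeeping and in the ``Chinese Remainder Theorem plus one extra prime'' step.
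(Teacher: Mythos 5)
Your argument is correct (with one cosmetic slip noted below), but it takes a genuinely different and noticeably heavier route than the paper. The paper chooses $\delta$ and $\tau$ \emph{simultaneously}: pick positive integers $a,b$ with $v'\df av,\ w'\df bw\in\Z^n$ and $v'+w'\neq 0$, pick a prime $p$ larger than $1/\eps$, $\den(t)$ and the absolute values of all coordinates of $v'$ and $v'+w'$, and set $\delta=\frac{1}{ap}$, $\tau=\frac{1}{bp}$; then both $t+\delta v=t+\frac{v'}{p}$ and $t+\delta v+\tau w=t+\frac{v'+w'}{p}$ have denominator exactly $\den(t)\,p$ by a one-line computation, so no analysis of the structure of $\den(t+\delta v)$ is needed. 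You instead fix $\delta=1/q$ first, establish the factorisation $\den(t+\delta v)=qd'$ with $d'\mid ED$, and then recover $\tau$ by a Chinese-Remainder search for an admissible small multiple of $w'/d$ along the primitive direction $w'$; this works, and has the mild virtue of showing that a suitable $\tau$ exists for \emph{every} sufficiently generic choice of $\delta$ of the form $1/q$ (and of matching the denominator $d$ exactly, rather than arranging a convenient common value), but it costs you the exponent-of-$q$ bookkeeping, the primitive-vector decomposition $w=\lambda w'$, and the CRT-plus-extra-prime step, none of which the paper needs. The one slip: your parenthetical justification that $\rho\not\equiv\rho+\operatorname{rad}(d')\pmod q$ ``as $\operatorname{rad}(d')\leq ED<q$'' does not follow from the largeness conditions you imposed on $q$ when $\eps$ is large (e.g.\ $2EDN/\eps$ may be smaller than $ED$); this is harmless, since $q\nmid\operatorname{rad}(d')$ already follows from $q$ being coprime to $ED$ and $\operatorname{rad}(d')\mid d'\mid ED$ (or you could simply add $q>ED$ to your list of requirements). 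Likewise, ``coprime to every numerator of a coordinate of $v$'' should read ``every nonzero numerator'', which is what your case split actually uses.
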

\begin{proof}
It is easy to see that there are strictly positive integers $a$ and $b$ such that, setting $v' \df av$ and $w' \df bw$, the vectors $v'$ and $w'$ belong to $\Z^n$ and $v' + w' \neq 0$.
We choose a prime $p$ strictly greater than the following values: $\frac{1}{\eps}$, $\den(t)$, and the absolute value of each coordinate of $v'$ and $v'+w'$.  It follows that $\frac{1}{p}<\eps$, the integers $p$ and $\den(t)$ are co-prime, and hence one easily obtains that $\den\mleft(t+\frac{v'}{p}\mright)=\den(t)p$. Furthermore, $\den(\frac{v'}{p})=\den(\frac{v'+w'}{p})=p$ (using the fact that $v',v'+w'\neq 0$).  Set $q\df \frac{1}{p}$. Then $q\in(0,\eps)$ and 
\[\den(t+q v')=\den\mleft(t+\frac{v'}{p}\mright)=\den(t)p=\den\mleft(t+\frac{v'+w'}{p}\mright)=\den(t+q v' +q w').\]
To conclude the proof, set $\delta \df \frac{q}{a}$ and $\tau \df \frac{q}{b}$.
\end{proof}

Informally, the next lemma asserts the following.
Let $S$ be a simplex in $\R^n$, $t \in S$ and $w_1,\dots w_l$ be vectors parallel to $S$.
Suppose that, for every ($k-1$)-dimensional face $F$ of $S$ to which $t$ belongs, for every $i$ the vector $w_i$ points towards $S$ when placed in the relative interior of $F$, and towards the relative interior of $S$ for at least one $i$.
Then moving from $t$ toward the direction given by a suitably small positive linear combination of $w_1,\dots, w_l$ gives a point in the relative interior of $S$.

We use the notation $\cdot$ for the scalar product of vectors. 

\begin{lemma} \label{l:lies-in-interior}
    Let $S \subseteq \R^n$ be a $k$-dimensional simplex and $t\in S$. Suppose that $S$ is described by the following system:
    \begin{align*}
        \begin{cases}
        \alpha_i\cdot x\geq a_i & \text{for }i\in\{0,\dots,k\},\\
        \alpha_i\cdot x= a_i & \text{for } i\in\{k+1,\dots,n\}.
        \end{cases}
    \end{align*}
If  $w_1, \dots, w_l$ are vectors parallel to $S$ such that for all $i \in \{0, \dots, k\}$ with $\alpha_i \cdot t = a_i$ and for all $j \in \{1, \dots, l\}$ we have 
\[\alpha_i \cdot w_j \geq 0\] 
and there is $j_0 \in \{1, \dots, l\}$ such that $\alpha_i \cdot w_{j_0} > 0$, then there is $\eps > 0$ such that, for all $\delta_1, \dots, \delta_l \in (0, \eps)$, $t + \delta_1 w_1 + \dots + \delta_l w_l$ is in the relative interior of $S$.
\end{lemma}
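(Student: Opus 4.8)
The plan is to examine the point $p \df t + \delta_1 w_1 + \dots + \delta_l w_l$ directly against the system describing $S$, checking the defining (in)equalities one family at a time. Since $S$ is a $k$-simplex cut out, inside its affine span, by the $k+1$ facet inequalities $\alpha_i \cdot x \geq a_i$ with $i \in \{0,\dots,k\}$, its relative interior is exactly
\[
\{x \in \R^n \mid \alpha_i \cdot x > a_i \text{ for } i \in \{0,\dots,k\},\ \alpha_i \cdot x = a_i \text{ for } i \in \{k+1,\dots,n\}\}.
\]
So the whole task is to produce a single $\eps > 0$ for which every one of these relations holds at $p$ whenever $\delta_1, \dots, \delta_l \in (0,\eps)$.

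First I would dispose of the equalities. Each $w_j$ is parallel to $S$, hence to the affine span of $S$, which is contained in $\{x \mid \alpha_i \cdot x = a_i,\ i \geq k+1\}$; therefore $\alpha_i \cdot w_j = 0$ for all $i \in \{k+1,\dots,n\}$ and all $j$. Consequently $\alpha_i \cdot p = \alpha_i \cdot t = a_i$ for every $i \geq k+1$, with no restriction on the $\delta_j$ whatsoever, so $p$ always lies in the affine span of $S$.

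Next I would handle the facet inequalities $i \in \{0,\dots,k\}$, splitting them into those active and those inactive at $t$. If $\alpha_i \cdot t = a_i$, the hypothesis gives $\alpha_i \cdot w_j \geq 0$ for all $j$ and $\alpha_i \cdot w_{j_0} > 0$ for some $j_0$, so
\[
\alpha_i \cdot p = a_i + \sum_{j} \delta_j (\alpha_i \cdot w_j) \geq a_i + \delta_{j_0}(\alpha_i \cdot w_{j_0}) > a_i
\]
for \emph{any} positive $\delta_1,\dots,\delta_l$ — this is precisely what the sign hypothesis buys us, and it imposes no upper bound. If instead $\alpha_i \cdot t > a_i$, then $\delta \mapsto \alpha_i \cdot p$ is continuous and takes the value $\alpha_i \cdot t > a_i$ at $\delta = 0$, so there is $\eps_i > 0$ with $\alpha_i \cdot p > a_i$ whenever $\delta_1,\dots,\delta_l \in (0,\eps_i)$ (explicitly, any $\eps_i$ with $\eps_i \sum_j \lvert \alpha_i \cdot w_j\rvert < \alpha_i \cdot t - a_i$ works). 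Taking $\eps$ to be the minimum of these finitely many $\eps_i$ (and $\eps \df 1$ if every facet is active at $t$) then guarantees that, for all $\delta_1,\dots,\delta_l \in (0,\eps)$, the point $p$ satisfies all the defining equalities and all the strict facet inequalities, i.e.\ lies in the relative interior of $S$.

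There is no genuine obstacle here: the lemma is elementary once the system is written out. The only points to watch are that $\eps$ must be chosen uniformly over the (at most $k+1$) facet constraints, which is harmless, and that the constraints active at $t$ are controlled by the sign hypothesis rather than by smallness of the $\delta_j$ — for those the conclusion in fact holds for every positive choice of the $\delta_j$, and it is only the inactive ones that force $\eps$ to be small.
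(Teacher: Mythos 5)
Your proof is correct and follows essentially the same route as the paper's: verify the equalities using that each $w_j$ is parallel to $S$, handle the active facet inequalities via the sign hypothesis (valid for all positive $\delta_j$), and handle the inactive ones by choosing $\eps$ small, taking a minimum over the finitely many constraints. Your explicit bound $\eps_i \sum_j \lvert \alpha_i \cdot w_j\rvert < \alpha_i \cdot t - a_i$ is just a quantitative version of the paper's ``for $\delta_1,\dots,\delta_l$ small enough''.
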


\begin{proof}
    For any $\delta_1, \dots, \delta_l >0$, $t + \delta_1 w_1 + \dots + \delta_l w_l$ is in the relative interior of $S$ if and only if
    \begin{align*}
        \begin{cases}
        \alpha_i\cdot (t + \delta_1 w_1 + \dots + \delta_l w_l)> a_i & \text{for }i\in\{0,\dots,k\},\\
        \alpha_i\cdot (t + \delta_1 w_1 + \dots + \delta_l w_l)= a_i & \text{for } i\in\{k+1,\dots,n\}.
        \end{cases}
    \end{align*}
    For any $i \in \{k +1, \dots, n\}$ the following conditions hold: $\alpha_i \cdot t = a_i$ (since $t \in S$) and, for every $j \in \{1, \dots, l\}$, $\alpha_i \cdot w_j = a_i$ (since $w_j$ is parallel to $S$); therefore, $\alpha_i\cdot (t + \delta_1 w_1 + \dots + \delta_l w_l)= a_i$.
    Therefore, $t + \delta_1 w_1 + \dots + \delta_l w_l$ is in the relative interior of $S$ if and only if, for every $i\in\{0,\dots,k\}$, $\alpha_i\cdot (t + \delta_1 w_1 + \dots + \delta_l w_l)> a_i$.
    Fix $i \in \{0, \dots, k\}$.
    If $\alpha_i \cdot t > a_i$, then for $\delta_1, \dots, \delta_l$ small enough we have $\alpha_i\cdot (t + \delta_1 w_1 + \dots + \delta_l w_l) = (\alpha_i\cdot t) + \delta_1 (\alpha_i \cdot w_1) + \dots + \delta_l (\alpha_i \cdot w_l) > a_i$.
    If, otherwise, $\alpha_i \cdot t = a_i$, then by hypothesis there is $j_0 \in \{1, \dots, l\}$ such that $\alpha_i \cdot w_{j_0} > 0$ and so,
    whatever positive values $\delta_1, \dots, \delta_l$ have, we have
    \[
        \alpha_i\cdot (t + \delta_1 w_1 + \dots + \delta_l w_l) = (\alpha_i \cdot t) + \delta_1 (\alpha_i \cdot w_1) + \dots + \delta_l (\alpha_i \cdot w_l) \geq a_i + \delta (\alpha_i \cdot w_{j_0}) > a_i. \qedhere
    \]
\end{proof}

The next lemma is crucial. We are going to prove that, if $S$ is a rational simplex of dimension $k\geq 2$ and $\eta\colon S \to \R$ is an affine map with integer coefficients that is not constant on some $(k-1)$-dimensional face $F$ of $P$, then $\eta$ can be \emph{squeezed} on a smaller domain without essentially changing its behaviour. 

The idea of the proof is to show the existence of a $\Z$-map that can slightly bend the face $F$ inside $S$.
\begin{center}

\begin{tikzpicture}[scale=4]
% Define coordinates for the first tetrahedron
\coordinate (A) at (0,0,0);
\coordinate (B) at (1,0,0);
\coordinate (C) at (0.5,0.87,0);
\coordinate (D) at (0.7,0.23,0.8);
\coordinate (E) at (0.2,0.5,0); %position of letter F
% Draw edges for the first tetrahedron
\draw[thick] (A) -- (C) -- (B);
\draw[thick,fill=red!20] (A) -- (D) -- (C);
\draw[thick,fill=red!30] (B) -- (D) -- (C);
\draw[thick] (C) -- (D);
%\draw[dashed] (B) -- (A);
% Define coordinates for the second tetrahedron
\coordinate (F) at (0.7,0.4,0.45);
\coordinate (G) at (0.53,0.5,0.65);
% Draw edges for the second tetrahedron
\draw[dashed,fill=red!20] (A) -- (F);
\draw[dashed,fill=green!20] (D) -- (F);
\draw[dashed,fill=yellow!20] (C) -- (F);
% Draw dashed lines for hidden edges in the first tetrahedron
\draw[thick,->] (G) -- (F);
% Labels for the second tetrahedron
\node[above right] at (F) {$z$};
\node[above] at (G) {$y$};
\node at (G) {};
\node[below] at (E) {$F$};
\end{tikzpicture}
\end{center}
The key is to find a point $y$ in the relative interior of $F$ and a point $z$ in $S\setminus F$ with the same denominator and on which $\eta$ assumes the same value.
To do so, we start with an arbitrary point $t$ in the relative interior of $F$. This may not be the desired point $y$, as $S \setminus F$ may lack points with the same denominator.
However, there is a point $y$ (which in our proof will be $t + \delta v$ for an appropriate vector $v$ and a positive real number $\delta$) close to $t$ that does the job: it will have the same denominator of a point $z$ (which in our proof will be $t + \delta v + \tau w$ for an appropriate vector $w$ and a positive real number $\tau$) in the relative interior of $S$.

\begin{lemma}[Squeezing lemma]\label{l:retract}
Let $k\geq 2$, let $S$ be a rational $k$-simplex in $\R^n$, let $F$ be a $(k-1)$-dimensional face of $S$, and let $\eta\colon \R^n\to \R$ be an affine map with integer coefficients that is not constant on $F$. There is a non-surjective $\Z$-map $\rho\colon S\to S$ such that the diagram
\[
\begin{tikzcd}
{S}\arrow{r}{\eta}&\R\\
{S}\arrow[dashed]{u}{\rho}\arrow[swap]{ur}{\eta}&
\end{tikzcd}
\]
commutes and, for every $(k-1)$-dimensional face $G \neq F$ of $S$ and $x\in G$, $\rho(x)=x$.
\end{lemma}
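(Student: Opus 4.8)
The goal is to build a non-surjective $\Z$-map $\rho\colon S\to S$ that fixes every $(k-1)$-face other than $F$ pointwise and satisfies $\eta\circ\rho=\eta$. Following the informal discussion preceding the statement, the strategy is: first find a point $y$ in the relative interior of $F$ and a point $z$ in the relative interior of $S$ with $\den(y)=\den(z)$ and $\eta(y)=\eta(z)$; then define a piecewise-affine retraction of $S$ onto $S$ that sends $y$ to $z$, keeps the complementary faces fixed, and interpolates affinely on an appropriate triangulation via \Cref{l:unique-extension}. Non-surjectivity will come from the fact that $y$ is no longer in the image (its neighbourhood in $F$ has been pushed into the interior), and commutativity $\eta\circ\rho=\eta$ will be forced by choosing all the interpolation data to lie inside level sets of $\eta$.

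\textbf{Step 1: locating $y$ and $z$.} Start with an arbitrary rational point $t$ in the relative interior of $F$. Since $\eta$ is affine with integer coefficients and not constant on $F$, its gradient restricted to the direction space of $F$ is a nonzero integer vector; pick a rational vector $v$ parallel to $F$ with $\eta(t+\delta v)=\eta(t)$ for all $\delta$ — i.e.\ $v$ in the kernel of the linear part of $\eta$ intersected with the direction of $F$ — which exists because $\dim F=k-1\geq 1$ and the kernel of a nonzero functional on a $(k-1)$-space has dimension $k-2\geq 0$; if $k=2$ this kernel is $\{0\}$, so here one instead takes $v=0$ and the argument degenerates appropriately, OR one picks $v$ parallel to $S$ but not to $F$ and lying in $\ker$ of the linear part of $\eta$ — here I would choose $w$ to be such a vector parallel to $S$, pointing into the relative interior of $S$ from $t$, and lying in the kernel of the linear part of $\eta$ (its existence uses $\dim S = k \geq 2$ together with $\eta|_S$ non-constant, so the level set of $\eta$ through $t$ meets the interior of $S$). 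Then apply \Cref{l:hyperplane} with this $t$, $v$, $w$ to obtain $\delta,\tau\in(0,\eps)$ with $\den(t+\delta v)=\den(t+\delta v+\tau w)$; set $y\df t+\delta v$ (if needed, taking $v=0$) and $z\df t+\delta v+\tau w$. Taking $\eps$ small enough (via \Cref{l:lies-in-interior}) guarantees $y$ stays in the relative interior of $F$ and $z$ in the relative interior of $S$, while $\eta(y)=\eta(z)=\eta(t)$ since $v,w$ lie in the kernel of the linear part of $\eta$.

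\textbf{Step 2: building $\rho$.} Now I want a triangulation $\Delta$ of $S$ having $y$ and $z$ among its vertices, having $F$ as a subcomplex, and refining the star of $t$ in a convenient way; \Cref{l:strong-triangulation} applied to $\{S, F, \{y\}, \{z\}\}$, together with the other $(k-1)$-faces, produces a regular triangulation $\Delta$ of $S$ in which each face is a union of simplices and $y,z\in\vertices(\Delta)$. Define $g\colon\vertices(\Delta)\to S$ by $g(y)=z$ and $g(v)=v$ for every other vertex $v$. One checks $\den(g(v))$ divides $\den(v)$ for all $v$ — the only nontrivial case is $v=y$, handled by Step 1's denominator equality — so \Cref{l:unique-extension} extends $g$ to a $\Z$-map $\rho\colon S\to S$ affine on each simplex of $\Delta$. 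Since every $(k-1)$-face $G\neq F$ is a union of simplices of $\Delta$ whose vertices are all fixed by $g$, $\rho$ restricts to the identity on $G$. For $\eta\circ\rho=\eta$: $\eta\circ\rho$ and $\eta$ agree on all vertices of $\Delta$ (again the only nontrivial vertex is $y$, where $\eta(\rho(y))=\eta(z)=\eta(y)$), and both are affine on each simplex of $\Delta$, hence they coincide. Finally, non-surjectivity: because $y$ lies in the relative interior of $F$, a small $F$-neighbourhood $N$ of $y$ consists of vertices near $y$ (and $y$ itself); after choosing $\Delta$ fine enough near $t$, $\rho$ pushes this neighbourhood off $F$ into the interior of $S$, while points far from $t$ are either fixed or moved continuously, so one can arrange that $y$ (or a nearby relative-interior point of $F$) is not in the image of $\rho$; this requires a short compactness/continuity argument comparing $\rho$ to the identity.

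\textbf{Main obstacle.} The delicate point is Step 1 when $k=2$: then $F$ is an edge and there is no room inside $F$ to move $t$ while staying on a level set of $\eta$, so the vector that does the denominator-matching must be the interior-pointing vector $w$ parallel to $S$, and one must simultaneously ensure (a) $w$ lies in the kernel of the linear part of $\eta$ — which is possible precisely because $\eta$ is non-constant on the $2$-simplex $S$, so its level sets are genuine lines meeting the interior — and (b) \Cref{l:hyperplane} still applies with $v=0$ or with $v,w$ chosen so that $v+w\neq0$; reconciling these constraints is where the proof needs care. The second, more routine-but-fiddly obstacle is the non-surjectivity bookkeeping: making precise, via a triangulation fine enough near $t$ and \Cref{l:lies-in-interior}, that the image $\rho(S)$ genuinely omits an open (relative to $F$) set of points, rather than merely moving $y$.
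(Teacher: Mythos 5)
Your overall architecture coincides with the paper's: find $y$ in the relative interior of $F$ and $z$ in the relative interior of $S$ with $\den(y)=\den(z)$ and $\eta(y)=\eta(z)$, take a regular triangulation of $S$ having $y$ among its vertices, send $y\mapsto z$ while fixing every other vertex, and extend affinely via \cref{l:unique-extension}; your verifications of $\eta\circ\rho=\eta$ and of $\rho|_G=\mathrm{id}$ for the facets $G\neq F$ are exactly the paper's. The genuine gap is the one you flag yourself in Step 1 and in your ``main obstacle'': you additionally insist that $y$ stay on the level set of $\eta$ through $t$, i.e.\ you look for $v$ in $V_F\cap V_\eta$ (writing $V_S,V_F,V_\eta$ for the directions of $S$, of $F$, and of the kernel of the linear part of $\eta$). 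Since $\eta$ is non-constant on $F$, that intersection has dimension $k-2$, so for $k=2$ it is $\{0\}$ and the construction collapses: with $v=0$ you cannot invoke \cref{l:hyperplane} (which assumes $v\neq 0$), and with $y=t$ pinned down the denominator matching can genuinely fail --- if, say, $t$ is a lattice point in the relative interior of $F$, no point $t+\tau w$ with $\tau\in(0,\eps)$ small has denominator $1$. The case $k=2$ is not marginal: it is precisely what \cref{l:make-space} needs when $n=2$, hence what the Main Result needs. The missing idea is that the constraint $\eta(y)=\eta(t)$ is superfluous: everything later uses only $\eta(y)=\eta(z)$, which already follows from $z-y=\tau w$ with $w$ parallel to $\ker\eta$. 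Accordingly, the paper takes $v$ to be an \emph{arbitrary} nonzero rational vector parallel to $F$ (it exists because $\dim F=k-1\geq 1$) and imposes the kernel condition only on $w\in(V_S\cap V_\eta)\setminus V_F$, then applies \cref{l:hyperplane} to that pair; $y=t+\delta v$ stays in the relative interior of $F$ by \cref{l:lies-in-interior}, uniformly for all $k\geq 2$.

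Two further points. First, your justification for the existence of the inward vector $w$ (``$\eta|_S$ non-constant, so the level set of $\eta$ through $t$ meets the interior of $S$'') appeals to the wrong hypothesis: non-constancy on $S$ is not enough. For $S=\conv\{(0,0),(1,0),(0,1)\}$, $F$ its bottom edge and $\eta(x_1,x_2)=x_2$, the map is non-constant on $S$, yet $V_S\cap V_\eta=V_F$, so no such $w$ exists and the level set through any $t\in F$ misses the relative interior. What is needed is non-constancy of $\eta$ on $F$ (which the lemma grants you); the paper derives the existence of a rational $w\in(V_S\cap V_\eta)\setminus V_F$ from it by a dimension count: $V_F\not\subseteq V_\eta$ forces $\dim(V_S\cap V_\eta)=k-1=\dim V_F$ while $V_S\cap V_\eta\neq V_F$. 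Second, the non-surjectivity needs neither a triangulation that is fine near $t$ nor a compactness/continuity comparison with the identity: the single point $y$ is already omitted from the image. On simplices of $\Delta$ not having $y$ as a vertex, $\rho$ is the identity and such simplices do not contain $y$; and if $T=\conv(y,u_1,\dots,u_m)\in\Delta$, then a point of $\rho(T)=\conv(z,u_1,\dots,u_m)$ lying on the hyperplane $\alpha_0\cdot x=a_0$ that supports $S$ along $F$ must assign weight $0$ to $z$ (since $\alpha_0\cdot z>a_0$ and $\alpha_0\cdot u_i\geq a_0$), and $y\notin\conv(u_1,\dots,u_m)$ because the vertices of $T$ are affinely independent.
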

\begin{proof}
By \cref{t:hull-intersection} the simplex $S$ can be described by a set of inequalities and equalities:
\[
\begin{cases}
\alpha_i\cdot x\geq a_i & \text{for }i\in\{0,\dots,k\},\\
\alpha_j\cdot x= a_j & \text{for } j\in\{k+1,\dots,n\}.
\end{cases}
\]
Since $S$ is a rational simplex, the coefficients are rational.
Without loss of generality, we can assume that $F$ is given by
\begin{align}\label{eq:faceF}
\begin{cases}
\alpha_0\cdot x= a_0,&\\
\alpha_i\cdot x\geq a_i &\text{for }i\in\{1,\dots,k\},\\
\alpha_j\cdot x= a_j &\text{for }j\in\{k+1,\dots,n\}.
\end{cases}
\end{align}
The relative interior of $F$ is then described by making the inequalities in \eqref{eq:faceF} strict:
\begin{equation}\label{eq:faceF-strict}
\begin{cases}
\alpha_0\cdot x= a_0,&\\
\alpha_i\cdot x> a_i &\text{for }i\in\{1,\dots,k\},\\
\alpha_j\cdot x= a_j &\text{for }j\in\{k+1,\dots,n\}.
\end{cases}
\end{equation}
Let $V_S$, $V_F$ and $V_\eta$ be the directions of $S$, $F$ and $\ker{\eta} \coloneqq \{x \in \R^n \mid \eta(x) = 0\}$, respectively. 
Then,
\begin{align*}
    V_S &= \{u \in \R^{n} \mid \alpha_{j} \cdot u = 0 \text{ for }j\in\{k+1,\dots,n\} \} \text{ and} \\
    V_F &= \{ u \in \R^n \mid \alpha_0 \cdot u = 0,\; \alpha_{j} \cdot u = 0 \text{ for }j\in\{k+1,\dots,n\}\}.
\end{align*}

Since $k \geq 2$, the dimension of the rational simplex $F$ is at least $1$ and so there is a rational point $t$ in the relative interior of $F$. In particular, \eqref{eq:faceF-strict} gives:
\[
\begin{cases}
\alpha_0\cdot t= a_0,&\\
\alpha_i\cdot t> a_i &\text{for }i\in\{1,\dots,k\},\\
\alpha_j\cdot t= a_j &\text{for }j\in\{k+1,\dots,n\}.
\end{cases}
\]
Let $v$ be a nonzero rational vector parallel to $F$, which exists because the dimension of $F$ is at least $1$ since $n \geq 2$. Since $v$ is parallel to $F$, we have
\[
\alpha_0 \cdot {v} = 0 \qquad\text{ and }\qquad    \alpha_{j} \cdot {v} = 0  \quad\text{ for all } j\in\{k+1,\dots,n\}.
\]

The next step in the proof is to find a point $y$ in the relative interior of $F$ and a point $z$ in $S\setminus F$ with the same denominator.

\begin{enumerate}[align=left, left=0pt, label = {\textbf{Claim 1.}}]
    \item
    There is a rational vector $w\in (V_S\cap V_\eta)\setminus V_F$. 
\end{enumerate} 

\noindent From $F \subseteq S$ we deduce $V_F \subseteq V_S$ and, since $\eta$ is not constant on $F$, $V_F \not\subseteq V_\eta$. Therefore, $V_S \not\subseteq V_\eta$. Thus, $\dim(V_S + V_\eta) > \dim(V_\eta) = n-1$ and thus $\dim(V_S + V_\eta) = n$. By Grassmann's identity, 
\[
    \dim(V_S \cap V_\eta) = \dim(V_S) + \dim(V_\eta) - \dim(V_S + V_\eta)= k + (n-1) -n = k -1.    
\]
Therefore, $V_F$ and $V_S \cap V_\eta$ have the same finite dimension and must be distinct because $V_F \not\subseteq V_\eta$. It follows that neither of the two is contained in the other one. Thus, $V_S \cap V_\eta \not\subseteq V_F$. This reasoning stays true if we work over the field of rational numbers: $(V_S \cap \Q) \cap (V_\eta \cap \Q) \not\subseteq (V_F \cap \Q)$, and the claim is proved.

Since $w \in V_S \setminus V_F$, $\alpha_0 \cdot {w} \neq 0$.
Up to possibly replacing ${w}$ with ${-w}$, we can suppose $\alpha_0 \cdot {w} > 0$.

With two applications of \cref{l:lies-in-interior}, we get: (i) there is $\eps' > 0$ such that, for all $\delta \in (0, \eps')$, the point $t + \delta v$ is in the relative interior of $F$, and (ii) there is $\eps'' > 0$ such that, for all $\delta, \tau \in (0, \eps'')$, $t + \delta v + \tau w$ is in the relative interior of $S$.
Therefore, we can fix $\eps > 0$ (take, for example, the minimum between $\eps'$ and $\eps''$) such that for all $\delta, \tau \in (0, \eps)$ the point $t + \delta v$ is in the relative interior of $F$ and $t + \delta v + \tau w$ is in the relative interior of $S$.

By \cref{l:hyperplane}, we can choose $\delta, \tau \in (0, \eps) \cap \Q$ such that, letting  $y \df t + \delta v$  and  $z \df t + \delta v + \tau w$, we have $\den(y)=\den(z)$. Then, $y$ is in the relative interior of $F$ and $z$ is in the relative interior of $S$.
Applying \cref{l:strong-triangulation} to $\{S,\{y\}\}$, we obtain a regular triangulation $\Delta$ of $S$ such that $y\in \vertices(\Delta)$. We define the function
\begin{align*}
    g \colon \vertices(\Delta) & \longrightarrow S\\
    x &\longmapsto {\begin{cases}
    z &\text{if }x=y,\\
    x&\text{otherwise.}
    \end{cases}}
\end{align*}
Since $\den(y)=\den(z)$, the function $g$ satisfies the hypothesis of \cref{l:unique-extension}. Therefore, it can be uniquely extended to a $\Z$-map $\rho\colon S\to S$ that is affine on each simplex of $\Delta$.
It is clear that $\rho$ is the identity on all ($k-1$)-dimensional faces of $S$ different from $F$. In addition, it is not surjective, because the point $y\in S$ is not in its image.
We claim that, for all $x \in \vertices(\Delta)$, $\eta(x) = \eta(g(x))$.
This is clear for $x \neq y$, since $g(x) = x$.
For the case $x = y$, it suffices to note that $w = \frac{1}{\tau}(z - y)$ and that $w$ is parallel to $\ker{\eta}$; therefore, $\eta(y) = \eta(z) = \eta(g(y))$.
This proves our claim that $\eta(x) = \eta(g(x))$.
Therefore, for all $x \in \vertices(\Delta)$, $\eta(x) = \eta(g(x)) = \eta(\rho(x))$.
Therefore, both $\eta$ and $\eta \circ \rho$ are $\Z$-maps from $S$ to $\R$ that coincide on $\vertices(\Delta)$ and are affine on each symplex of $\Delta$.
It follows that $\eta=\eta\circ \rho$. 
\end{proof}

\begin{lemma}\label{l:make-space}
Let $n\geq 2$, and let $\eta\colon [0,1]^n\to \R$ be a $\Z$-map that is not constant on the boundary of $[0,1]^n$. There is a non-surjective $\Z$-map $\alpha\colon [0,1]^n\to [0,1]^n$ that makes the following diagram commute.
\[
\begin{tikzcd}
{[0,1]^n}\arrow{r}{\eta}&\R\\
{[0,1]^n}\arrow[dashed]{u}{\alpha}\arrow[swap]{ur}{\eta}&
\end{tikzcd}
\]
\end{lemma}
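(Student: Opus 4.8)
The plan is to localise the non-constancy of $\eta$ to a single simplex lying on the boundary of the cube, apply the Squeezing Lemma there, and glue the resulting map with the identity on the rest of the cube.

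\smallskip

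\noindent\emph{Step 1: finding a boundary simplex on which $\eta$ is not constant.} First I would fix, by \cref{l:triangulation}, a regular triangulation $\KK$ of $[0,1]^n$ on which $\eta$ restricts to an affine map with integer coefficients on every simplex. Since each facet of $[0,1]^n$ is cut out by a supporting hyperplane of the form $\{x_i = 0\}$ or $\{x_i = 1\}$, the intersection of any simplex of $\KK$ with such a facet is a face of that simplex; hence the simplices of $\KK$ contained in $\partial [0,1]^n$ form a subcomplex $\KK_\partial$ with $\lvert \KK_\partial\rvert = \partial [0,1]^n$, pure of dimension $n-1$. This is the point where $n\geq 2$ enters: for $n\geq 2$ the boundary $\partial[0,1]^n$ is connected, so the intersection graph of the $(n-1)$-simplices of $\KK_\partial$ (two joined when they meet) is connected as well; since $\eta$ is continuous and affine on each simplex, if $\eta$ were constant on every $(n-1)$-simplex of $\KK_\partial$ it would be constant on $\partial [0,1]^n$. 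Therefore there is an $(n-1)$-simplex $F \in \KK$ with $F \subseteq \partial [0,1]^n$ on which $\eta$ is not constant.

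\smallskip

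\noindent\emph{Step 2: squeezing and gluing.} Because $F$ lies on the boundary of $[0,1]^n = \lvert\KK\rvert$, it is a facet of exactly one $n$-simplex $S \in \KK$. As $S$ is full-dimensional, $\eta|_S$ extends uniquely to an affine map $\bar\eta\colon \R^n \to \R$, which has integer coefficients by the choice of $\KK$, and $\bar\eta$ is not constant on $F$. Applying \cref{l:retract} with $k = n \geq 2$ to $S$, $F$ and $\bar\eta$ yields a non-surjective $\Z$-map $\rho\colon S \to S$ with $\bar\eta\circ\rho = \bar\eta$ and $\rho(x) = x$ for every $x$ on an $(n-1)$-face of $S$ other than $F$. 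I then define $\alpha\colon [0,1]^n \to [0,1]^n$ by $\alpha|_S = \rho$ and $\alpha|_T = \mathrm{id}$ on every other $n$-simplex $T$ of $\KK$. For $T \neq S$ the intersection $S \cap T$ is a proper face of $S$; it cannot equal $F$, since a boundary $(n-1)$-simplex is a face of a single $n$-simplex, so it is contained in some facet of $S$ different from $F$, where $\rho$ is the identity; hence the two definitions agree on $S \cap T$ and $\alpha$ is a well-defined continuous map. Assembling the affine pieces of $\rho$ with the identity shows $\alpha$ is a $\Z$-map; and $\eta\circ\alpha = \eta$, because on $S$ we have $\eta\circ\rho = \bar\eta\circ\rho = \bar\eta = \eta$ and $\alpha$ is the identity elsewhere. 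Finally, pick $p \in S \setminus \rho(S)$: if $p$ lay in some $n$-simplex $T \neq S$ then $p \in S \cap T$, which as above is either $F$ — impossible, as $F$ is not a face of $T$ — or is contained in a facet $F'\neq F$ of $S$ with $\rho|_{F'} = \mathrm{id}$, whence $p \in F' = \rho(F') \subseteq \rho(S)$, a contradiction. So $p \notin \alpha([0,1]^n)$ and $\alpha$ is not surjective.

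\smallskip

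\noindent I expect the main obstacle to be the bookkeeping in Step 2: the simplex $F$ must be chosen on $\partial[0,1]^n$ precisely so that (i) the squeezed map $\rho$ extends by the identity without clashing on shared faces of the triangulation, and (ii) the point omitted by $\rho$ is not silently restored by the identity part of $\alpha$. Both points rest on a boundary $(n-1)$-simplex being a face of exactly one maximal simplex, and the availability of such an $F$ is exactly what requires $\partial[0,1]^n$ to be connected — i.e.\ $n \geq 2$. Everything else reduces to routine facts about triangulations once \cref{l:retract} is available.
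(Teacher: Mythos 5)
Your proposal is correct and follows essentially the same route as the paper: triangulate $[0,1]^n$ adapted to $\eta$, use connectedness of the boundary (this is where $n\geq 2$ enters) to find an $(n-1)$-simplex $F\subseteq\partial[0,1]^n$ on which $\eta$ is non-constant, apply \cref{l:retract} to the $n$-simplex containing $F$, and glue the squeezing map with the identity. The only differences are cosmetic: the paper derives the existence of $F$ from the image of the boundary being connected and infinite rather than from connectivity of the adjacency graph of boundary simplices, and it states the continuity and non-surjectivity of $\alpha$ more briefly where you spell out the underlying fact that a boundary $(n-1)$-simplex is a facet of exactly one $n$-simplex.
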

\begin{proof}
By \cref{l:triangulation} there is a rational triangulation $\mathcal{K}$ of $[0,1]^n$ such that for every simplex $T\in\mathcal{K}$ there is an affine map $\eta_T$  with integer coefficients that coincides with $\eta$ over $T$.
Let $I$ be the image  under $\eta$ of the boundary of $[0,1]^{n}$. 
The set $I$ is not a singleton because $\eta$ is not constant on the boundary of $[0,1]^n$. Furthermore, since $n\geq 2$, the boundary of $[0,1]^n$ is connected, and hence $I$ is connected. It follows that $I$ contains infinitely many points.  Since $\KK$ contains finitely many $(n-1)$-simplices, there is one of them, say $F$, included in the boundary of $[0,1]^n$ and such that $\eta$ is not constant on it. 
Let $S$ be an $n$-simplex in $\KK$ that contains $F$. By \cref{l:retract}, there is a non-surjective $\Z$-map $\rho\colon S\to S$ such that
\begin{enumerate}[label=(\roman*), ref = \roman*]
\item\label{enum:property1} for every $x\in S$, $(\eta\circ\rho)(x)=\eta(x)$, and
\item\label{enum:property2} for every ($k-1$)-dimensional face $G$ of $S$ different from $F$ and every $x\in G$, $\rho(x)=x$.
\end{enumerate} 
We set
\begin{align*}
\alpha\colon [0,1]^n&\longrightarrow [0,1]^n\\
x&\longmapsto {\begin{cases}
\rho(x)&\text{if }x\in S,\\
x&\text{otherwise.}
\end{cases}}
\end{align*}
The continuity of $\alpha$ follows from \eqref{enum:property2}. Furthermore, $\alpha$ is a non-surjective $\Z$-map because $\rho$ is such a map. Finally, \eqref{enum:property1} implies that $\eta\circ\alpha=\eta$.
\end{proof}

We note that \cref{l:make-space} would fail for $n = 1$, as one can see by taking $\eta$ to be the inclusion of $[0,1]$ into $\R$.

The importance of \cref{l:make-space} lies in the fact that $\alpha$ is non-surjective; this guarantees that if we modify the value of $\eta$ only on points that are not in the image of $\alpha$ so to get a $\Z$-map $\theta$, then $\theta$ will be more general than $\eta$, because the following diagram will commute.
\[
\begin{tikzcd}
{[0,1]^n}\arrow{r}{\theta}&\R\\
{[0,1]^n}\arrow[dashed]{u}{\alpha}\arrow[swap]{ur}{\eta}&
\end{tikzcd}
\]
And if we succeed in letting $\theta$ gain a value that is not in the image of $\eta$, then $\theta$ will be \emph{strictly} more general than $\eta$.
This is the point of the following result.

\begin{theorem}\label{t:exists-upper-bound}
Let $n\geq 2$, let $\eta\colon [0,1]^n\to \R$ be a $\Z$-map that is not constant on the boundary of $[0,1]^n$, and let $z\in \Z$.
There are $\Z$-maps $\theta\colon [0,1]^n\to \R$ and $\alpha\colon [0,1]^n\to [0,1]^n$ such that $z$ belongs to the image of $\theta$ and the following diagram commutes.
\[
\begin{tikzcd}
{[0,1]^n}\arrow{r}{\theta}&\R\\
{[0,1]^n}\arrow[dashed]{u}{\alpha}\arrow[swap]{ur}{\eta}&
\end{tikzcd}
\]
\end{theorem}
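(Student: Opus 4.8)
The plan is to reduce everything to the two lemmas already established in this section, \cref{l:make-space} and \cref{l:extend-super}. First I would feed $\eta$ into \cref{l:make-space}: since $\eta$ is a $\Z$-map that is not constant on the boundary of $[0,1]^n$, there is a non-surjective $\Z$-map $\alpha\colon[0,1]^n\to[0,1]^n$ with $\eta\circ\alpha=\eta$. The point, as anticipated in the discussion preceding the statement, is that the non-surjectivity of $\alpha$ leaves a region of $[0,1]^n$ outside its image on which $\eta$ can be freely modified without destroying the equation $\eta\circ\alpha=\eta$.

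Next I would argue that $P\df\alpha([0,1]^n)$ is a rational polyhedron with $P\subsetneq Q\df[0,1]^n$. Strict inclusion is immediate from non-surjectivity. For the polyhedrality, apply \cref{l:triangulation} to $\alpha$ to obtain a regular triangulation $\KK$ of $[0,1]^n$ on each simplex of which $\alpha$ is affine with integer coefficients; then $P=\bigcup_{T\in\KK}\alpha(T)$, and an affine map carries the convex hull of a finite set to the convex hull of the images, so each $\alpha(T)$ is the convex hull of the images of the (rational) vertices of $T$, hence a rational polytope. Thus $P$ is a finite union of rational polytopes, i.e.\ a rational polyhedron.

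Then I would apply \cref{l:extend-super} with these $P\subsetneq Q=[0,1]^n$, the given integer $z$, and the $\Z$-map obtained by restricting $\eta$ to $P$ (a restriction of a $\Z$-map is again a $\Z$-map). This produces a $\Z$-map $\theta\colon[0,1]^n\to\R$ that extends $\eta|_P$ and whose image contains $z$. It remains to check that the required triangle commutes: for every $x\in[0,1]^n$ we have $\alpha(x)\in P$, so $\theta(\alpha(x))=\eta(\alpha(x))$ because $\theta$ extends $\eta|_P$, and $\eta(\alpha(x))=\eta(x)$ because $\eta\circ\alpha=\eta$ by \cref{l:make-space}. Hence $\theta\circ\alpha=\eta$ and $z\in\theta([0,1]^n)$, which is exactly what is claimed.

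Since the genuinely nontrivial content has been absorbed into \cref{l:make-space} (which rests on the Squeezing Lemma) and \cref{l:extend-super}, I do not expect any real obstacle here; the only point demanding a line of justification is that the image of a $\Z$-map on a rational polyhedron is again a rational polyhedron, which follows routinely from \cref{l:triangulation} as sketched above.
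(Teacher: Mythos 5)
Your proposal is correct and follows essentially the same route as the paper: apply \cref{l:make-space} to obtain a non-surjective $\alpha$ with $\eta\circ\alpha=\eta$, take $P$ to be the image of $\alpha$, extend $\eta|_P$ via \cref{l:extend-super} so as to attain $z$, and observe $\theta\circ\alpha=\eta\circ\alpha=\eta$. The only difference is that you spell out, via \cref{l:triangulation}, why the image of $\alpha$ is a rational polyhedron, a fact the paper simply takes for granted.
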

\begin{proof}
By \cref{l:make-space}, there is a non-surjective $\Z$-map $\alpha\colon [0,1]^n\to [0,1]^n$ such that $\eta\circ\alpha=\eta$.
Let $P$ be the rational polyhedron given by the image of $\alpha$. Since $\alpha$ is not surjective, $P$ is strictly contained in $[0,1]^n$. By \cref{l:extend-super}, the restriction of $\eta$ to $P$ can be extended to a map $\theta\colon [0,1]^n\to \R$ that attains the value $z$. Finally, $\theta\circ\alpha=\eta\circ\alpha=\eta$.
\end{proof}

%%%%%%%%%%%%%%%%%%%%%%%%%%%%%%%%%%%%%%%SECTION%%%%%%%%%%%%%%%%%%%%%%%%%%%%%%%%%%%%%%%
\section{Universal covering and the degree of a unifier}
\label{s:strictly-more-general}
%%%%%%%%%%%%%%%%%%%%%%%%%%%%%%%%%%%%%%%%%%%%%%%%%%%%%%%%%%%%%%%%%%%%%%%%%%%%%%%%%%%%%

Our next goal is to prove that the $\Z$-map $\theta$ obtained in \cref{t:exists-upper-bound} is strictly more general than $\eta$. To achieve this, we will build on the results of \cite{AbbaDiSpada}. Consider the map $\zeta\colon \R\to \B$ that wraps $\R$ around $\B$, counter-clockwise, at constant speed 1, sending $0$ to $(0,0)$:
\begin{equation}\label{eq:def-zeta}
\begin{split}
\zeta\colon \R&\longrightarrow \B\\
x&\longmapsto{\begin{cases}
(x-\lfloor x\rfloor,0)&\text{if }\lfloor x\rfloor\equiv 0 \mod 4,\\
(1,x-\lfloor x\rfloor)&\text{if }\lfloor x\rfloor\equiv 1 \mod 4,\\
(1-(x-\lfloor x\rfloor),1)&\text{if }\lfloor x\rfloor\equiv 2 \mod 4,\\
(0,1-(x-\lfloor x\rfloor))&\text{if }\lfloor x\rfloor\equiv 3 \mod 4,
\end{cases}}
\end{split}
\end{equation}
where $\lfloor x\rfloor$ is the greatest integer below $x$.
As proved in \cite[Lemma 3]{AbbaDiSpada}, the map $\zeta\colon \R\to \B$ is the \emph{universal cover} of $\B$ (see e.g., \cite[Section 1.3]{hatcher}).  In particular, this means that for every continuous map $\eta\colon [0,1]^n\to \B$ there is a \emph{lift} $\tilde{\eta}$ of $\eta$ to $\R$, i.e.\ a continuous function $\tilde{\eta} \colon [0,1]^n \to \R$ such that $\eta=\zeta\circ\tilde{\eta}$.

\begin{remark}\label{r:zeta-Z}
Notice that the map $\zeta\colon \R\to \B$ is continuous but is not a $\Z$-map; however, its restriction $\zeta_{a,b}$ to any closed interval $[a,b]$ with $a,b \in \Z$ is such a map. Indeed, $\zeta_{a,b}$ is defined on every interval $[a+i, a+i+1]$, with $0 \leq i < \lvert b-a \rvert$, by one of the four cases of \eqref{eq:def-zeta}; each of these functions is affine with integer coefficients on $[a+i, a+i+1]$ because $x-\floor{x}$ can be written on such an interval as $ x-(a+i)$.
\end{remark}
Every continuous map $\eta \colon [0,1]^n \to \B$ has infinitely many lifts $[0,1]^n \to \R$. However, they all have a connected image and all share the same length in $\R$ (see \cite[Lemma 5]{AbbaDiSpada}). We denote the length of the image of any lift of  $\eta$ by $\dg(\eta)$ (the \word{degree} of $\eta$).

\begin{lemma}[{\cite[Lemma 7]{AbbaDiSpada}}]\label{l:degree-factor}
If a $\Z$-map $\theta \colon  [0,1]^m \to \B$ is more general than a $\Z$-map $\eta\colon [0,1]^n \to \B$, then $\dg(\eta)\leq\dg(\theta)$.
\end{lemma}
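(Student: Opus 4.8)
The plan is to reduce the statement to a short diagram chase through the universal cover $\zeta\colon \R\to \B$. First I would unwind the hypothesis: by the characterisation of ``more general'' for $\Z$-maps into $\B$ recalled after \cref{d:duality}, the assumption that $\theta$ is more general than $\eta$ means precisely that there is a $\Z$-map $\alpha\colon [0,1]^n\to[0,1]^m$ with $\eta=\theta\circ\alpha$.

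Next I would choose a lift $\tilde\theta\colon [0,1]^m\to\R$ of $\theta$ through $\zeta$ (which exists by the lifting property recalled above), so that $\theta=\zeta\circ\tilde\theta$. The key observation is then that $\tilde\theta\circ\alpha\colon [0,1]^n\to\R$ is a lift of $\eta$: it is continuous, being a composite of continuous maps, and $\zeta\circ(\tilde\theta\circ\alpha)=\theta\circ\alpha=\eta$. By \cite[Lemma 5]{AbbaDiSpada} all lifts of $\eta$ have images that are intervals of the same length, namely $\dg(\eta)$; in particular $\operatorname{length}(\Im(\tilde\theta\circ\alpha))=\dg(\eta)$, and likewise $\operatorname{length}(\Im(\tilde\theta))=\dg(\theta)$.

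To finish I would note that $\Im(\tilde\theta\circ\alpha)\subseteq\Im(\tilde\theta)$; since both are images of continuous maps on a compact connected space, they are compact connected subsets of $\R$, i.e.\ closed bounded intervals, and one is contained in the other, so their lengths satisfy $\operatorname{length}(\Im(\tilde\theta\circ\alpha))\le\operatorname{length}(\Im(\tilde\theta))$. Combining these facts gives $\dg(\eta)=\operatorname{length}(\Im(\tilde\theta\circ\alpha))\le\operatorname{length}(\Im(\tilde\theta))=\dg(\theta)$, as desired.

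I do not expect a genuine obstacle here: the substantive content is entirely packaged in the cited facts about lifts (existence of a lift on the cube, and the fact that the image of a lift is an interval whose length does not depend on the chosen lift), together with the translation of ``more general'' into the existence of the mediating $\Z$-map $\alpha$. The only thing requiring care is to make sure that these cited statements are invoked for the correct maps, and that $\tilde\theta\circ\alpha$ is genuinely recognised as a lift of $\eta$ rather than of $\theta$.
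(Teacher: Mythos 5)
Your argument is correct, and it is the natural one: the paper itself does not prove this lemma but imports it from \cite[Lemma 7]{AbbaDiSpada}, and your reduction --- unwind ``more general'' into $\eta=\theta\circ\alpha$, lift $\theta$ through $\zeta$, observe that $\tilde\theta\circ\alpha$ is then a lift of $\eta$ whose image is contained in that of $\tilde\theta$, and compare lengths of the resulting compact intervals --- is exactly the standard covering-space argument behind the cited result. No gaps to report.
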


\begin{lemma}\label{l:not-constant-is-preserved}
Let $n\in \N$ and $\eta\colon [0,1]^n\to \B$ be a $\Z$-map that is not constant on $\{0,1\}^n$. For every $n' \in \N$, every $\Z$-map $\theta\colon [0,1]^{n'}\to \B$ that is more general than $\eta$ is not constant on $\{0,1\}^{n'}$.
\end{lemma}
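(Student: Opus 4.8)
The plan is to use the description of ``more general'' in terms of $\Z$-maps recalled after \cref{d:duality}, together with the elementary observation that $\Z$-maps cannot increase denominators. Assume $\theta$ is more general than $\eta$, so that there is a $\Z$-map $\alpha\colon [0,1]^n\to [0,1]^{n'}$ with $\theta\circ\alpha=\eta$. I would first show that $\alpha$ maps $\{0,1\}^n$ into $\{0,1\}^{n'}$. Indeed, each vertex $v\in\{0,1\}^n$ has $\den(v)=1$; writing $\alpha$ near $v$ as an affine map with integer coefficients and evaluating at $v$, the image $\alpha(v)$ has all coordinates integers (equivalently, $\den(\alpha(v))$ divides $\den(v)=1$). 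Since moreover $\alpha(v)\in[0,1]^{n'}$, this forces $\alpha(v)\in\{0,1\}^{n'}$.

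Then I would argue by contraposition. Suppose $\theta$ is constant on $\{0,1\}^{n'}$, say with value $c\in\B$. For every $v\in\{0,1\}^n$ we have $\alpha(v)\in\{0,1\}^{n'}$ by the previous paragraph, hence $\eta(v)=\theta(\alpha(v))=c$. Thus $\eta$ is constant (with value $c$) on $\{0,1\}^n$, contradicting the hypothesis; therefore $\theta$ is not constant on $\{0,1\}^{n'}$. (Note that the statement as displayed has a harmless typo, $\{0,1\}^m$ in place of $\{0,1\}^{n'}$, which this argument settles.)

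I do not expect any genuine obstacle here: the only point requiring care is the denominator bound for $\Z$-maps, which is immediate from the definition (an affine map with integer coefficients sends a point of denominator $d$ to a point whose denominator divides $d$), and it is the same fact underlying \cref{l:unique-extension}. In particular, this lemma does not need the universal cover machinery at all; its role is that, once \cref{t:exists-upper-bound} produces a candidate for a strictly more general unifier, this lemma certifies that the candidate still satisfies the ``not constant on the boundary'' hypothesis, so that the construction of ever more general unifiers can be iterated.
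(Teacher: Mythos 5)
Your proof is correct and follows essentially the same route as the paper: exhibit the $\Z$-map $\alpha$ with $\theta\circ\alpha=\eta$, note that $\alpha$ sends $\{0,1\}^n$ into $\{0,1\}^{n'}$ (the denominator argument you spell out is exactly what the paper uses implicitly), and transfer non-constancy from $\eta$ to $\theta$; your contrapositive phrasing is equivalent to the paper's direct one, and you correctly identify the $\{0,1\}^m$ misprint.
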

\begin{proof}
Let $\theta\colon [0,1]^{n'}\to \B$ be a $\Z$-map that is more general than $\eta$, i.e.\ such that there is a $\Z$-map $\alpha \colon [0,1]^n\to [0,1]^{n'}$ such that $\eta=\theta\circ \alpha$. Let $a,b\in \{0,1\}^n$ be such that $\eta(a)\neq \eta(b)$. Then $\theta(\alpha(a))=\eta(a)\neq \eta(b)=\theta(\alpha(b))$. Since $\alpha(a),\alpha(b)\in\{0,1\}^{n'}$, this shows that $\theta$ is not constant on $\{0,1\}^{n'}$.
\end{proof}

\begin{lemma}\label{p:make-space}
Let $n\geq 2$, and let $\eta\colon [0,1]^n\to \B$ be a $\Z$-map that is not constant on the boundary of $[0,1]^n$. There is a $\Z$-map $\theta\colon [0,1]^n\to \B$ that is strictly more general than $\eta$.
\end{lemma}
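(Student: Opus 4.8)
The plan is to move the problem to the universal cover $\R$ of $\B$: lift $\eta$ to a $\Z$-map $\tilde{\eta}\colon[0,1]^n\to\R$, apply the Euclidean result \cref{t:exists-upper-bound} to $\tilde{\eta}$, push the output back down to $\B$ via $\zeta$, and certify strictness using the degree together with \cref{l:degree-factor}. So first I would fix a lift $\tilde{\eta}\colon[0,1]^n\to\R$ of $\eta$ (so $\eta=\zeta\circ\tilde{\eta}$), which exists since $\zeta$ is the universal cover of $\B$, and check that $\tilde{\eta}$ is itself a $\Z$-map. By \cref{l:triangulation} there is a triangulation $\KK$ of $[0,1]^n$ on each simplex of which $\eta$ is affine with integer coefficients. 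For a simplex $T\in\KK$, the set $\eta(T)$ is a convex subset of $\B$, hence lies in a single edge of the unit square; since $\tilde{\eta}(T)$ is connected and $\zeta$ restricts to a homeomorphism of each interval $[k,k+1]$ (with $k\in\Z$) onto an edge, we get $\tilde{\eta}(T)\subseteq[k,k+1]$ for some $k\in\Z$ and $\tilde{\eta}|_T=\bigl(\zeta|_{[k,k+1]}\bigr)^{-1}\circ\eta|_T$. Inspecting \eqref{eq:def-zeta}, each $\bigl(\zeta|_{[k,k+1]}\bigr)^{-1}$ is the restriction of an affine map with integer coefficients; composing with $\eta|_T$ shows that $\tilde{\eta}$ is affine with integer coefficients on each simplex of $\KK$, hence a $\Z$-map. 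Note also that $\tilde{\eta}$ is not constant on the boundary of $[0,1]^n$: otherwise $\eta=\zeta\circ\tilde{\eta}$ would be, contrary to hypothesis.

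Next, since $\tilde{\eta}([0,1]^n)$ is a compact interval, I would choose $z\in\Z$ with $z\notin\tilde{\eta}([0,1]^n)$ and apply \cref{t:exists-upper-bound} to $\tilde{\eta}$ and $z$, obtaining $\Z$-maps $\tilde{\theta}\colon[0,1]^n\to\R$ and $\alpha\colon[0,1]^n\to[0,1]^n$ with $z\in\Im(\tilde{\theta})$ and $\tilde{\theta}\circ\alpha=\tilde{\eta}$. The image of $\tilde{\theta}$ is bounded, hence contained in $[a,b]$ for some $a,b\in\Z$, so by \cref{r:zeta-Z} the map $\zeta_{a,b}$ is a $\Z$-map and therefore $\theta\df\zeta\circ\tilde{\theta}\colon[0,1]^n\to\B$ is a $\Z$-map, being a composite of $\Z$-maps. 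Then $\theta\circ\alpha=\zeta\circ\tilde{\theta}\circ\alpha=\zeta\circ\tilde{\eta}=\eta$, so $\theta$ is more general than $\eta$.

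Finally I would show that $\eta$ is not more general than $\theta$. As $\tilde{\eta}$ and $\tilde{\theta}$ are lifts of $\eta$ and $\theta$ respectively, the degrees $\dg(\eta)$ and $\dg(\theta)$ equal the lengths of the intervals $\tilde{\eta}([0,1]^n)$ and $\tilde{\theta}([0,1]^n)$. From $\tilde{\theta}\circ\alpha=\tilde{\eta}$ we get $\tilde{\eta}([0,1]^n)\subseteq\tilde{\theta}([0,1]^n)$, while $z\in\tilde{\theta}([0,1]^n)\setminus\tilde{\eta}([0,1]^n)$; since both sets are intervals, the second has strictly greater length, i.e.\ $\dg(\theta)>\dg(\eta)$. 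Were $\eta$ more general than $\theta$, \cref{l:degree-factor} would force $\dg(\theta)\leq\dg(\eta)$, a contradiction; hence $\theta$ is strictly more general than $\eta$. I expect the one delicate point to be the verification that the lift $\tilde{\eta}$ is a $\Z$-map — concretely, the observation that an integer-affine map out of a simplex lands in a single edge of the square and therefore lifts to an integer-affine map into $\R$ — with everything afterwards being a direct assembly of \cref{t:exists-upper-bound}, \cref{r:zeta-Z} and \cref{l:degree-factor}.
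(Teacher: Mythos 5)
Your proof is correct and follows essentially the same route as the paper: lift $\eta$ along the universal cover $\zeta$, apply \cref{t:exists-upper-bound} to the lift with an integer $z$ outside its (compact) image, compose back with $\zeta$, and obtain strictness from the degree via \cref{l:degree-factor}. The only difference is that you verify by hand that the lift $\tilde{\eta}$ is a $\Z$-map (convex images of simplices lie in a single edge of $\B$, so the lift is piecewise an integer-affine reparametrisation), whereas the paper simply cites \cite[Lemma 5.3]{MSuni} for this fact.
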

\begin{proof}
Since $\zeta\colon \R\to \B$ is the universal cover of $\B$, the map $\eta$ admits a lift $\tilde{\eta}\colon [0,1]^n\to \R$. The function $\tilde{\eta}$ is a $\Z$-map by \cite[Lemma 5.3]{MSuni}. The image of $\tilde{\eta}$ is a compact subspace of $\R$. Let $z$ be an integer not in $\tilde{\eta}\big[[0,1]^n\big]$. Then, by \cref{t:exists-upper-bound}, there are a $\Z$-map $\theta\colon [0,1]^n\to \R$ and a $\Z$-map $\alpha\colon [0,1]^n\to [0,1]^n$ such that $z$ belongs to the image of $\theta$ and the following diagram commutes.
\[
\begin{tikzcd}
{[0,1]^n}\arrow{r}{\theta}&\R\\
{[0,1]^n}\arrow[dashed]{u}{\alpha}\arrow[swap]{ur}{\tilde{\eta}}&
\end{tikzcd}
\]
Composing with $\zeta$, we obtain that the following diagram commutes.
\[
\begin{tikzcd}
{[0,1]^n}\arrow{r}{\zeta\circ\theta}&\B\\
{[0,1]^n}\arrow[dashed]{u}{\alpha}\arrow[swap]{ur}{\eta}&
\end{tikzcd}
\]
Thus, $\zeta\circ\theta$ is more general than $\eta$. Moreover, $\dg(\eta)< \dg(\zeta\circ\theta)$ because the latter function attains an additional integer point among its values; by \cref{l:degree-factor}, it follows that $\eta$ is not more general than $\zeta\circ\theta$.
\end{proof}

\begin{lemma}\label{l:technical}
Let $n \in \N$ and let $\eta\colon [0,1]^n\to \B$ be a $\Z$-map that is not constant on $\{0,1\}^n$. 
For every $n' \in \N$ and every $\Z$-map $\theta\colon [0,1]^{n'}\to \B$ more general than $\eta$ there is a $\Z$-map $\psi \colon [0,1]^{\max\{2, n'\}}\to \B$ that is strictly more general than $\theta$.
\end{lemma}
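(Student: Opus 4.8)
The plan is to reduce the statement directly to \cref{p:make-space}; the only extra ingredient is a ``padding'' observation disposing of the low-dimensional cases.

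\emph{First observation.} Since $\theta$ is more general than $\eta$ and $\eta$ is not constant on $\{0,1\}^n$, \cref{l:not-constant-is-preserved} gives that $\theta$ is not constant on $\{0,1\}^{n'}$; in particular $n'\geq 1$, as every $\Z$-map on $[0,1]^0$ is constant and thus could not be more general than the non-constant $\eta$. Put $m\df\max\{2,n'\}$, so that $m\geq n'$ and $m\geq 2$. Let $\pi\colon[0,1]^m\to[0,1]^{n'}$ be the projection onto the first $n'$ coordinates and $\iota\colon[0,1]^{n'}\to[0,1]^m$ the inclusion $(x_1,\dots,x_{n'})\mapsto(x_1,\dots,x_{n'},0,\dots,0)$; both are $\Z$-maps. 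Set $\psi_0\df\theta\circ\pi\colon[0,1]^m\to\B$. The identities $\psi_0=\theta\circ\pi$ and $\theta=\psi_0\circ\iota$ exhibit $\psi_0$ and $\theta$ as equally general. Moreover, choosing $a,b\in\{0,1\}^{n'}$ with $\theta(a)\neq\theta(b)$, we have $\iota(a),\iota(b)\in\{0,1\}^m$ and $\psi_0(\iota(a))=\theta(a)\neq\theta(b)=\psi_0(\iota(b))$; hence $\psi_0$ is not constant on $\{0,1\}^m$, and in particular not constant on the boundary of $[0,1]^m$.

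\emph{Main step.} As $m\geq 2$ and $\psi_0\colon[0,1]^m\to\B$ is not constant on the boundary of $[0,1]^m$, \cref{p:make-space} furnishes a $\Z$-map $\psi\colon[0,1]^m\to\B$ strictly more general than $\psi_0$. It remains to transport this along the equivalence of $\psi_0$ and $\theta$. Since $\psi$ is more general than $\psi_0$ and $\psi_0$ is more general than $\theta$ (via $\theta=\psi_0\circ\iota$), composing the two witnessing $\Z$-maps shows that $\psi$ is more general than $\theta$. And if $\theta$ were also more general than $\psi$, then, using again $\theta=\psi_0\circ\iota$, the map $\psi_0$ would be more general than $\psi$, contradicting that $\psi$ is strictly more general than $\psi_0$. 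Therefore $\psi\colon[0,1]^{\max\{2,n'\}}\to\B$ is strictly more general than $\theta$, as desired.

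I do not anticipate any genuine obstacle here: all the substance has already been delivered by \cref{p:make-space} (and, behind it, the Squeezing lemma \cref{l:retract}) together with \cref{l:not-constant-is-preserved}. The only things to be careful about are the bookkeeping of the degenerate cases — $n'=0$, which cannot occur, and $n'=1$, where $\max\{2,n'\}$ strictly exceeds $n'$ so the domain must genuinely be enlarged — and the two elementary diagram chases (that $\Z$-maps compose, and that strict generality is stable under passing to an equally general map), both of which are absorbed by the first observation and the short argument following the invocation of \cref{p:make-space}.
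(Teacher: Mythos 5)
Your proof is correct and takes essentially the same route as the paper: pad $\theta$ to $[0,1]^{\max\{2,n'\}}$ by precomposing with the coordinate projection (your $\psi_0$ is the paper's $\theta'$), check via \cref{l:not-constant-is-preserved} that the padded map is not constant on the boundary, and conclude with \cref{p:make-space}. The only difference is that you make explicit the transfer of strict generality from $\psi_0$ back to $\theta$ (and the exclusion of $n'=0$), which the paper leaves implicit.
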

\begin{proof}
    The function 
    \begin{align*}
    \theta' \colon [0,1]^{\max\{2, n'\}}& \longrightarrow \B\\
    (x_1, \dots, x_{\max\{2, n'\}}) & \longmapsto \theta(x_1, \dots, x_{n'})
    \end{align*}
    is more general than $\theta$, and so more general than $\eta$, too.
    By \cref{l:not-constant-is-preserved}, $\theta'$ is not constant on $\{0,1\}^{\max\{2, n'\}}$. Then we apply \cref{p:make-space} to get the conclusion.
\end{proof}

\begin{mtheorem}[Main Result]
    For all $m \geq 2$ and $n\geq 2$, the unification type of {\L}ukasiewicz logic restricted to at most $m$ variables for the problem and at most $n$ variables for the solutions is nullary.
\end{mtheorem}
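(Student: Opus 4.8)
The plan is to follow the route mapped out in \Cref{s:introduction}: produce a single unification problem that witnesses the nullarity, and then feed it through the lemmas already proved. Take $S$ to be the problem consisting of the one equation \eqref{eq:unification-problem} in the two variables $x_1,x_2$. Since $2\le m$, this is an admissible problem with at most $m$ variables; recalling that the unification type restricted to at most $m$ variables for the problem and at most $n$ for the solution is, by definition, the least desirable type occurring among all non-empty $U_E(n,X',S')$ with $\lvert X'\rvert\le m$ --- nullary being the least desirable --- it therefore suffices to show that $U_E(n,X,S)$ is non-empty and has nullary type. To do so I would first transport the situation to the geometric side: by \Cref{l:iso} the poset $U_E(n,X,S)$ is isomorphic to the homomorphism poset $V_E(n,X,S)$, and by the duality of \Cref{d:duality} the latter is the poset of $\Z$-maps $[0,1]^{n'}\to\B$ with $n'\le n$ ordered by ``being more general than'', where $\B$ is the dual of $S$. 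Under this correspondence the unifier $\iota$ of the Fact stated in \Cref{s:introduction} becomes the $\Z$-map $\iota'\colon[0,1]\hooklongrightarrow\B$, $x\mapsto(x,0)$, which uses a single variable; since $1\le n$, this shows $\iota\in U_E(n,X,S)$ and in particular $U_E(n,X,S)\neq\emptyset$.

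The heart of the argument is then to show that $\iota$ lies below no maximal element of $U_E(n,X,S)$. I would first observe that $\iota'(0)=(0,0)\neq(1,0)=\iota'(1)$, so that $\iota'$ is a $\Z$-map on $[0,1]^1$ that is not constant on $\{0,1\}^1$. Now let $\sigma\in U_E(n,X,S)$ be arbitrary with $\iota\les_E\sigma$; dually, $\sigma$ is a $\Z$-map $[0,1]^{n'}\to\B$ with $n'\le n$ that is more general than $\iota'$. Applying \Cref{l:technical} with $\eta\df\iota'$ (so that the dimension called $n$ there equals $1$) and $\theta\df\sigma$, I obtain a $\Z$-map $\psi\colon[0,1]^{\max\{2,n'\}}\to\B$ that is strictly more general than $\sigma$. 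The key numerical point is that $\max\{2,n'\}\le n$, because $n'\le n$ and $n\ge 2$; hence $\psi$ uses at most $n$ variables, so $\psi\in U_E(n,X,S)$, and therefore $\sigma$ is not maximal in $U_E(n,X,S)$. Since $\sigma$ was an arbitrary element above $\iota$, no element of $U_E(n,X,S)$ above $\iota$ is maximal. Consequently the set of maximal elements of $U_E(n,X,S)$ does not cover $\iota$, so the non-empty poset $U_E(n,X,S)$ has nullary type, which gives the theorem.

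I do not expect a real obstacle in this last step: all the substantive work is already done in the earlier results, above all the Squeezing Lemma \Cref{l:retract}, \Cref{l:make-space}, \Cref{t:exists-upper-bound} and \Cref{p:make-space} (the latter packaged together with \Cref{l:not-constant-is-preserved} inside \Cref{l:technical}). The one point that genuinely needs care is the bookkeeping of the number of variables --- that the witnessing problem has $2\le m$ variables, that $\iota'$ has $1\le n$ variables, and, crucially, that the strictly-more-general unifier produced by \Cref{l:technical} still has at most $n$ variables, i.e.\ that $\max\{2,n'\}\le n$ whenever $n'\le n$. This is exactly where the hypothesis $n\ge 2$ is used, and where the whole scheme would collapse for $n=1$, consistently with the failure of \Cref{l:make-space} recorded immediately after its proof.
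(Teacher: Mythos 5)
Your proposal is correct and follows essentially the same route as the paper's own proof: the same witnessing problem and unifier $\iota$, the dual map $\iota'$ and its non-constancy on $\{0,1\}$, and then \Cref{l:technical} together with the observation $\max\{2,n'\}\le n$ to rule out any maximal unifier above $\iota$. The only difference is that you spell out the bookkeeping (\Cref{l:iso}, the duality, non-emptiness, the covering condition) that the paper leaves implicit.
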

\begin{proof}
Recall from \Cref{s:introduction} the unification problem $x_1 \lor x_2 \lor \lnot x_1 \lor \lnot x_2 \approx 1$ in variables $x_1$ and $x_2$ and its unifier $\iota$ in the variable $y$ defined by $\iota(x_1) \df y$ and $\iota(x_2) \df 0$. 
Under the duality of \cref{d:duality}, the unifier $\iota$ corresponds to 
\begin{align*}
\iota'\colon [0,1]&\hooklongrightarrow \B\seq[0,1]^2\\
x&\longmapsto (x,0).
\end{align*}
The function $\iota'$ is non-constant on $\{0,1\}$, because $\iota'(0) = 0 \neq 1 = \iota'(1)$.
By \cref{l:technical}, and since $n \geq 2$, among all $\Z$-maps  $[0,1]^k \to \B$ with $k \leq n$ there is no maximally general one that is more general than $\iota'$.
\end{proof}

\section*{Acknowledgments}
The first author's research was funded by UK Research and Innovation (UKRI) under the UK government’s Horizon Europe funding guarantee (grant number EP/Y015029/1, Project ``DCPOS''). The ``Horizon Europe guarantee'' scheme provides funding to researchers and innovators who were unable to receive their Horizon Europe funding (in this case, for a Marie Skłodowska-Curie Actions (MSCA) grant) while the UK was in the process of associating.

The second author acknowledges financial support under the National Recovery and Resilience Plan (NRRP), Mission 4, Component 2, Investment 1.1, Call for tender No. 1409 published on 14.9.2022 by the Italian Ministry of University and Research (MUR), funded by the European Union – NextGenerationEU– Project Title Quantum Models for Logic, Computation and Natural Processes (Qm4Np) – CUP F53D23011170001 - Grant Assignment Decree No. 1016 adopted on 07/07/2023 by the Italian Ministry of Ministry of University and Research (MUR).

The research of both authors was supported by the Italian Ministry of University and Research through the PRIN project n.~20173WKCM5 \emph{Theory and applications of resource sensitive logics}.

\bibliographystyle{abbrv}

\end{document}